\newtheorem{thm}{Theorem}[section]
\newtheorem{lemma}{Lemma}[section]
\newtheorem{prop}{Proposition}[section]
\theoremstyle{definition}
 \numberwithin{equation}{section}
\newcommand{\rr}{\mathbb{R}}
\newcommand{\al}{\alpha}
\newcommand{\de}{\delta}
 \newcommand{\eps}{\epsilon}
\newcommand{\la}{\lambda}
 \renewcommand{\(}{\left(}
\renewcommand{\)}{\right)}
\renewcommand{\[}{\left[}
\renewcommand{\]}{\right]}
\begin{document}
\title[Multiple blow-up phenomena for the sinh-Poisson equation]{Multiple blow-up phenomena for the sinh-Poisson equation}
\author{Massimo Grossi}
\address[Massimo Grossi] {Dipartimento di Matematica "G.Castelnuovo", Universit\`{a} di Roma ``La Sapienza", P.le Aldo Moro 5, 00185 Roma, Italy}
\email{grossi@mat.uniroma1.it}

\author{Angela Pistoia}
\address[Angela Pistoia] {Dipartimento SBAI, Universit\`{a} di Roma ``La Sapienza", via Antonio Scarpa 16, 00161 Roma, Italy}
\email{pistoia@dmmm.uniroma1.it}

\begin{abstract}
We consider the sinh-Poisson equation
$$(P)_\lambda\quad -\Delta u=\la\sinh u\ \hbox{in}\ \Omega,\  u=0\ \hbox{on}\ \partial\Omega,$$
where   $\Omega$ is a smooth  bounded domain in $\rr^2$ and
$\lambda$ is a small positive parameter.

If  $0\in\Omega$  and $\Omega$ is symmetric with respect to the origin, for any integer $k$ if $\la$ is small enough, we construct a family of solutions
to $(P)_\la$ which blows-up at the origin whose positive mass is $4\pi k(k-1)$ and negative mass is $4\pi k(k+1).$

It gives a complete answer to an open problem formulated by Jost-Wang-Ye-Zhou in [Calc. Var. PDE (2008) 31: 263-276].

 \end{abstract}
\subjclass[2012]{35J60, 35B33, 35J25, 35J20, 35B40}

\date{\today}

\keywords{Blow up values, tower of bubbles, singular Liouville problems} \maketitle

\section{Introduction}
In this paper we will study the semilinear elliptic equation
 \begin{equation}\label{p}
-\Delta u=\lambda \(e^u-e^{-u}\)\quad \hbox{in}\ \Omega,\qquad  u=0\quad
\hbox{on}\ \partial\Omega,
\end{equation}
where   $\Omega$ is a smooth bounded domain in $\rr^2$ and
$\lambda$ is a small positive parameter.

This problem arises in plasma physics and statistical mechanics. See, for instance,
 Chorin \cite{c},  Marchioro-Pulvirenti \cite{mapu} and the references therein.

This problem also plays a very important role in the study of the construction of constant mean curvature surfaces initiated by Wente \cite{w1,w2}.

In 1988 Spruck \cite{s} studied \eqref{p} when $\Omega$ contains the origin and it is a domain symmetric with respect to reflections about the $x_1$ and $x_2$ axes. In particular, he proved
that a sequence of nontrivial solutions $u_n$ of \eqref{p} with $\la_n\to0$  is such that
 $u_n(x)\to-2\ln|g(x)|^2$ uniformly on compact subsets of $\Omega\setminus\{0\},$ where $g$ is the symmetric conformal map of $\Omega$ onto the unit disk.
 Twenty years later Jost-Wang-Ye-Zhou \cite{jwyz} investigated the blow-up analysis of solutions to \eqref{p} and they give a more precise asymptotic behavior when the sequence of solutions $u_n$ blows-up as $\la_n\to0.$
Let us define the positive and the negative blow-up set of the sequence $u_n$ respectively by
 $$S_+:=\left\{x\in\Omega\ :\ \exists\ x_n\to x\ s.t.\ u_n(x_n)\to+\infty\right\}$$
 $$S_-:=\left\{x\in\Omega\ :\ \exists\ x_n\to x\ s.t.\ u_n(x_n)\to-\infty\right\}.$$
For any $x_0\in S_+\cup S_-$ let us define the positive and the negative mass of $x_0$ respectively by
$$m_+(x_0):=\lim\limits_{r\to0}\lim\limits_n\int\limits_{B(x_0,r)}\lambda_ne^{u_n(x)}dx,\ m_-(x_0):=\lim\limits_{r\to0}\lim\limits_n\int\limits_{B(x_0,r)}\lambda_ne^{-u_n(x)}dx.$$
Jost-Wang-Ye-Zhou \cite{jwyz} proved that $S_+$ and $S_-$ are finite sets and that the masses $m_+(x_0)$ and $m_-(x_0)$ are multiple of $8\pi.$
This is an analogue of the result of Li-Shafrir for the Gelfand problem
\begin{equation}\label{mean}
-\Delta u=\lambda  e^u \quad \hbox{in}\ \Omega,\qquad  u=0\quad
\hbox{on}\ \partial\Omega.
\end{equation}
In view of the relationship established by Ohtsuka-Suzuki \cite{os} 
$$\(m_+(x_0)-m_-(x_0)\)^2=8\pi \(m_+(x_0)+m_-(x_0)\)$$
it follows that for any $x_0\in S_+\cup S_-$
$$m_+(x_0)=4\pi k(k-1)\ \hbox{and}\ m_-(x_0)=4\pi k(k+1)$$
or
$$m_+(x_0)=4\pi k(k+1)\ \hbox{and}\ m_-(x_0)=4\pi k(k-1)$$
for some integers $k\ge1.$
When $k=1$ we say that $x_0$ is a simple (positive or negative) blow-up point, while if $k\ge2$ we say that $x_0$ is a multiple (nodal) blow-up point.

 Bartolucci-Pistoia \cite{bp} and Bartsch-Pistoia-Weth \cite{bpw} constructed sign-changing solutions to \eqref{p} with  one or more simple  positive and  simple negative blow-up points.
The solutions they found are sum of standard bubbles which solve the 
Liouville problem
\begin{equation}\label{plims}
-\Delta w= e^w\quad \hbox{in}\quad \rr^2,\qquad
\int\limits_{\rr^2} e^{w(x)}dx<+\infty .
\end{equation}

As far as it concerns existence of solutions with multiple blow-up points,
in \cite{jwyz} the authors asked the following question.

{\em (Q)\quad Is it   possible to find solutions to problem \eqref{p} with a  multiple nodal blow-up point, i.e. $k\ge2.$}

In this paper we give a positive answer to this question. The result we have is

\begin{thm}\label{main}
Assume   $0\in\Omega$  and $\Omega$ is symmetric with respect to the
origin, i.e. $x\in\Omega$ iff $-x\in\Omega.$

For any integer $k$, there exists $\lambda_k>0$ such that for any $\lambda\in(0,\lambda_k)$ problem \eqref{p}
has a sign-changing solution $u_\la$ such that $u_\la(x)=u_\la(-x)$ and 
\begin{equation}\label{a1}
u_\la(x)\to (-1)^k8\pi  kG(x,0)\ \hbox{uniformly on compact subsets of $\Omega\setminus\{0\}$ as $\la\to0.$} 
\end{equation} 
Moreover, the origin is a multiple nodal blow-up point   whose
  blow up values    are
\begin{equation}\label{a2}
m_-(0)=4\pi k(k+1)\ \hbox{and}\ m_+(0)=4\pi k(k-1)\  \hbox{if $k$ is even}
\end{equation} 
and
\begin{equation}\label{a3}m_-(0)=4\pi k(k-1)\ \hbox{and}\ m_+(0)=4\pi k(k+1)\  \hbox{if $k$ is odd.}
\end{equation} 
\end{thm}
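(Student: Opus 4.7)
The approach is a finite-dimensional Lyapunov-Schmidt reduction carried out in the symmetric subspace $H^1_{0,\mathrm{sym}}(\Omega):=\{u\in H^1_0(\Omega):u(-x)=u(x)\}$; the hypothesis $\Omega=-\Omega$ kills the translation parameters and forces every bubble of the ansatz to be centered at the origin. The natural ansatz is a \emph{tower} of $k$ radial bubbles of alternating signs, living at exponentially separated scales $\delta_1\gg\delta_2\gg\cdots\gg\delta_k$ and all centered at $0$. The $j$-th layer is modelled not on a standard Liouville bubble of mass $8\pi$, but on a radial entire solution of a singular Liouville equation $-\Delta\mathcal{U}_j=|x|^{2\alpha_j}e^{\mathcal{U}_j}$ in $\mathbb{R}^2$ of mass $8\pi(2j-1)$, because on the scale $|x|\sim\delta_j$ the sum of the $j-1$ outer bubbles contributes a logarithmic term of the form $-2\alpha_j\log|x|+C_j$ that the next layer must absorb as a point-vortex weight. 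Summing $k$ such bubbles with alternating signs produces the total mass $\sum_{j=1}^k 8\pi(2j-1)=8\pi k^2$, split as in \eqref{a2}--\eqref{a3} according to the parity of the index, with far-field contribution as in \eqref{a1}.

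Concretely, set
$$W_\lambda(x) = \sum_{j=1}^{k}(-1)^{j+\sigma_k}\, P\mathcal{U}_j(x/\delta_j),$$
parametrize the scales as $\delta_j^2 = d_j\lambda^{\gamma_j}$ with an increasing sequence of exponents $\gamma_j$ determined by the inter-layer matching and free amplitudes $d_j>0$, and estimate the error $R_\lambda:=-\Delta W_\lambda-\lambda(e^{W_\lambda}-e^{-W_\lambda})$ in a weighted $L^\infty$ norm adapted to the tower. The linearized operator $L_\lambda\phi:=-\Delta\phi-\lambda(e^{W_\lambda}+e^{-W_\lambda})\phi$, restricted to the symmetric subspace and orthogonal to the $k$-dimensional approximate kernel spanned by the projected dilation modes $PZ_j$ of each layer, is shown to be uniformly invertible with norm $O(|\log\lambda|)$ by a scale-by-scale blow-up contradiction. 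A contraction mapping argument then produces a small correction $\phi_\lambda$ orthogonal to the approximate kernel such that $W_\lambda+\phi_\lambda$ solves \eqref{p} up to a Lagrange combination $\sum_{j=1}^k c_j^\lambda PZ_j$.

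Killing those multipliers is equivalent to finding a critical point of the reduced functional $\mathcal{F}_\lambda(d)=J_\lambda(W_\lambda+\phi_\lambda)$, where $J_\lambda(u):=\tfrac12\int_\Omega|\nabla u|^2-\lambda\int_\Omega(e^u+e^{-u}-2)$. Expanding $\mathcal{F}_\lambda$ in powers of $|\log\lambda|^{-1}$ yields a leading profile built from the Robin function $H(0,0)$, the self-energies of the $\mathcal{U}_j$'s and their mutual interactions; this profile admits a non-degenerate critical point $(\bar d_1,\ldots,\bar d_k)$ found recursively from the outermost layer inward. \textbf{The main obstacle} is this recursive matching: the layers live at exponentially different scales with opposite signs coupled through the sinh nonlinearity, and the cross-terms $\int e^{\pm W_\lambda}$ mix all scales simultaneously, so one has to tune the exponents $\gamma_j$ and amplitudes $d_j$ so that at every level the effective inner equation reduces to a singular Liouville equation of exactly the right vortex strength $\alpha_j$ and mass $8\pi(2j-1)$. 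Once this is achieved, \eqref{a1} follows from the convergence $P\mathcal{U}_j\to 8\pi(2j-1)G(\cdot,0)$ on compacta of $\Omega\setminus\{0\}$ combined with the alternating signs, and the mass quantization \eqref{a2}--\eqref{a3} is dictated by the parity of $k$ through the sign of the innermost bubble.
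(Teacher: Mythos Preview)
Your outline is correct in its architecture---a sign-alternating tower of radial singular Liouville bubbles with exponents $\alpha_j=4j-2$ (equivalently mass $8\pi(2j-1)$) centered at the origin, working in the even subspace to suppress translations---and it would lead to the result. However, the route you propose is genuinely different from, and heavier than, the paper's.

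You set up a standard Lyapunov--Schmidt scheme: leave the scale amplitudes $d_j$ free, invert $L_\lambda$ only orthogonally to the $k$-dimensional approximate kernel spanned by the projected dilation modes $PZ_j$, and then kill the Lagrange multipliers by locating a nondegenerate critical point of a reduced energy $\mathcal{F}_\lambda(d_1,\dots,d_k)$. The paper bypasses this entire finite-dimensional reduction. It fixes the $d_j$ (and in fact the full scales $\delta_j$) \emph{a priori} by the exact matching identities
\[
-\alpha_j\ln\delta_j-2\sum_{i>j}(-1)^{i-j}\alpha_i\ln\delta_i-\ln(2\alpha_j^2)+\sum_{i=1}^k(-1)^{i-j}h_i(0)+\ln\lambda=0,
\]
which, together with $\alpha_j=4j-2$, make the interaction functions $\Theta_j$ small on each annulus and force the error $\mathcal{R}_\lambda$ to be $O(\lambda^{(2-p)/2p(2k-1)})$ in $L^p$. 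The crucial gain is that with these specific parameters the linearized operator $\mathcal{L}_\lambda$ is \emph{fully} invertible on the symmetric space, with no cokernel at all: Step~2 of the linear analysis shows, via testing against $PZ_i$ and $Pw_i$ and exploiting the triangular coupling between layers, that the would-be dilation modes cannot survive in the limit. Hence a single contraction mapping in $H^1_0$ produces $\phi_\lambda$ directly, with no multipliers and no reduced functional.

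What each approach buys: yours is the more routine template and would be robust to perturbations of the setting, but it obliges you to carry out the reduced-energy expansion and verify nondegeneracy of its critical point---work that your sketch asserts but does not do. The paper's approach is shorter and conceptually sharper: the observation that the inter-layer coupling already rules out the dilation directions is precisely what makes the tower rigid once the matching conditions are imposed, and it eliminates an entire section of computation. A minor point: in the paper's labeling the scales satisfy $\delta_1\ll\delta_2\ll\cdots\ll\delta_k$ (the innermost bubble is the standard Liouville one, $\alpha_1=2$), opposite to your convention, but this is only bookkeeping.
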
 

Here 
\begin{equation}\label{green}
G(x,y)={1\over 2\pi}\ln{1\over |x-y|}+H(x,y),\quad x,y\in\Omega
\end{equation}
is the Green's function of the Dirichlet Laplacian in $\Omega$ and $H(x,y)$ is its regular part.

The solution $u_\la$ is constructed by superposing $k$ different kind of bubbles  with alternating sign. Each bubble  solves a different singular
Liouville problem
\begin{equation}\label{plim}
-\Delta w=|x|^{\al-2}e^w\quad \hbox{in}\quad \rr^2,\qquad
\int\limits_{\rr^2} |x|^{\al-2}e^{w(x)}dx<+\infty 
\end{equation}
for a suitable choice of $k$ different $\al$'s (see \eqref{alfa}). The choice of $\al$'s is a crucial point in the construction of the solution. We will show in Section \ref{uno} that necessarily
$$\al_i=4i-2\quad\hbox{for any }\ i=1,\dots,k.$$

We remark that when  $\al=2$   problem \eqref{plim} reduces to the well known Liouville equation\eqref{plims} whose solutions have been classified by Chen-Li \cite{cl} to be radially symmetric. When $\al>2$ is an integer all solutions to \eqref{plim} have been classified by Prajapat-Tarantello \cite{pt}. In this case problem 
 \eqref{plim} has radial and non-radial solutions. Our construction just relies on  the radial ones.
 
Even if the solution  we find resembles a tower of bubbles, it is important to point out that it is a new kind of tower of bubbles.
Indeed, classical tower of bubbles are constructed by superposing bubbles which solve  the same limit problem in the whole space, while our solution
is constructed by superposing different bubbles which are solutions to  different limit problems in $\rr^2.$ 

This is a new phenomena:
the solution we find is generated  by cooking up  
bubbles related to different limit problems. The existence of this new kind of solutions was suggested by a recent result due to Grossi-Grumiau-Pacella \cite{ggp}. They study the asymptotic behavior of the least energy nodal radial solution  to the problem 
$$-\Delta u=|u|^{p-1}u \quad \hbox{in}\ B,\qquad  u=0\quad
\hbox{on}\ \partial B,$$
where $B$ is the unit ball in $\rr^2$ and the exponent $p$ goes to $+\infty.$ In particular, they prove that the positive  and the negative parts of   this solution (suitable scaled)
converge to the solutions of the limit problems \eqref{plim} with two different values of $\alpha$'s.

We recall that   classical towers of bubbles were constructed for some critical problems in $\rr^n$ with $n\ge3.$ In particular, towers of positive bubbles were found by 
Del  Pino-Dolbeault-Musso \cite{ddm1,ddm2},   Ge-Jing-Pacard \cite{gjp} and
 Del  Pino-Musso-Pistoia \cite{dmp}, while  towers of sign-changing bubbles were built by 
 Pistoia-Weth \cite{pw}, Musso-Pistoia \cite{mp1,mp2} and 
  Ge-Musso-Pistoia \cite{gmp}. See also Esposito-Wei \cite{ew} for a related problem with  Neumann boundary condition and Del Pino-Dolbeault-Musso \cite{ddm}
  for a problem with the $p-$Laplacian operator.

We want to emphasize that in the present paper the idea of  using bubbles related to different limit problems is crucial! 
Indeed, the proof could not work if we argue  as in all the previous papers,  where the same bubbles always  is used to build the solution.

We also want to point out that an extremely delicate point in the paper concerns the linear theory developed in Section \ref{tre}. In this framework some new ideas are necessary. Moreover,
we remark that our approach also simplifies the linear theory studied in \cite{egp} and \cite{dkm}.


Finally, we believe that Theorem \ref{main} holds even if we drop the assumption on the symmetry of $\Omega.$ More precisely, we conjecture that in any domain $\Omega$
it is possible to construct a family of sign-changing solutions which blows-up at the  maximum point of the Robin's function  with the prescribed blow-up values.

The proof of our result relies on a contraction mapping argument. The paper is organized as follows. In Section \ref{uno} we establish some preliminary estimates.
In Section \ref{due} we estimate the error term. In Section \ref{tre} we study a linear problem. In Section \ref{quattro} we complete the proof of Theorem \ref{main}. In Appendix we write some useful facts.

\medskip

{\em Acknowledgments.} 
The authors would like to thank   F. Pacella for many helpful discussions.
\medskip

\section{The ansatz and the choice of $\al$'s }\label{uno}

Let $\al\ge2.$ Let us introduce the functions
\begin{equation}\label{walfa}
w^\al_\de(x):=\ln 2\al^2{\de^\al\over\(\de^\al+|x|^\al\)^2}\quad
x\in\rr^2,\ \de>0
\end{equation}
which solve the problem \eqref{plim}.

Let us introduce the projection  $P  u$ of  a function $u$
into $H^1_0(\Omega),$ i.e.
\begin{equation}\label{pro}
 \Delta P u=\Delta u\quad \hbox{in}\ \Omega,\qquad  P u=0\quad \hbox{on}\ \partial\Omega.
\end{equation}

Let $k$ be a fixed integer. We look for a sign changing solution
to \eqref{p} as
\begin{equation}\label{ans}
u_\la(x):=W_\la(x)+\phi_\la(x),\quad W_\la(x):=\sum\limits_{i=1}^k
(-1)^iP w_{\de_i}^{\al_i}(x)
\end{equation}
where  for any $i=1,\dots,k$ the $\al_i$'s satisfy
\begin{equation}\label{alfa}
\al_i:=4i-2
\end{equation}
and the concentration parameters satisfy
\begin{equation}\label{para}
\de_i:=d_i\la^{2(k-i)+1\over\al_i}=d_i\la^{2(k-i)+1\over4i-2}\ \hbox{for some}\ d_i>0.
\end{equation}
 It is important to point out that   by \eqref{alfa} and \eqref{para} we deduce
\begin{equation}\label{para2}
  {\de_i\over\de_{i+1}}= {d_i\over d_{i+1}}\la^{2k\over 4i^2-1}\to 0\ \hbox{as}\ \la\to0.
\end{equation}

The rest term $\phi_\la$ will be choose in the space
$ \mathrm{H}^1_0(\Omega)$ and will be symmetric with respect to the origin, i.e.
$\phi(x)=\phi(-x)$ for any $x\in\Omega .$

\medskip

The choice of $\de_i$'s and $\al_i$'s    is motivated by the need for the interaction among bubbles to be small.
Indeed, an important feature is that each bubble interacts with all the other ones and  in general the interaction is not negligible!
The interaction will be measured in Lemma \ref{errore} using the function 
\begin{align}\label{tetaj}
\Theta_j(y):=&(-1)^j W_\la (\de_j y)-w_j(\de_j y)-(\al_j-2)\ln|\de_j
y| +\ln\la\nonumber \\ =&Pw_j(\de_j y)-w_j(\de_j y)-(\al_j-2)\ln|\de_j
y|+\sum\limits_{i\not= j} (-1)^{i-j}P{w_i(\de_j y)}
+\ln\la.\end{align}
The choice of parameters $\al_j$ and $\de_j$   made in \eqref{alfai} and   \eqref{deltai} (which imply \eqref{alfa} and \eqref{para}) ensures that $\Theta_j$ is small. Roughly speaking, the choice of $\al_j$ allows to kill the interaction among the $j$-th bubble and all the precedent (faster) bubbles, while the choice of $\de_j$ allows to kill the interaction among 
the $j$-th bubble and all the consecutive (slower) bubbles. 
More precisely, in order to have $\Theta_j$ small in Lemma \ref{teta} we will need to   choose   $\de_j$'s and $\al_j$'s so that
\begin{equation}\label{alfai}
(\al_j-2)+2\sum\limits^k_{i=1\atop i<j} (-1)^{i-j} \al_i=0
\end{equation}
and
\begin{equation}\label{deltai}
-  \al_j\ln\de_j -2\sum\limits^k_{i=1\atop i>j}  (-1)^{i-j} \al_i
\ln\de_i-\ln(2\al_j^2)+\sum\limits_{i=1}^k(-1)^{i-j}h_i(0)+\ln\la=0,
\end{equation}
where we agree that if $j=1$ or $j=k$ the sum over the indices $i<j$ or $i>j$ is zero, respectively.
Here $h_i(x):=4\pi\al_i H(x,0).$

By \eqref{alfai} we immediately deduce that
\begin{equation}\label{alfa1}\al_1 =2\quad \hbox{and}\quad \al_{j+1} =\al_j+4\ \hbox{for}\ j=2,\dots,k-1,\end{equation}
which implies \eqref{alfa} and by \eqref{deltai} we immediately
deduce that
\begin{equation}\label{delta2}\de_k^{\al_k} ={e^{-\sum\limits_{i=1}^k(-1)^{i-k}h_i(0)}\over 2\al_k^2}\la  \end{equation}
and
\begin{equation}\label{delta3}\de_{j }^{\al _{j }}=\(4\al_j^2\al_{j+1}^2e^{-2\sum\limits_{i=1}^k(-1)^{i-j}h_i(0)}\)\de_{j+1}^{\al_{j+1}}\la^2\ \hbox{for}\ j=1,\dots,k-1,\end{equation}
which implies \eqref{para}.
We also remark that
\begin{align}\label{hi}
 \sum\limits_{i=1}^k(-1)^{i-j}h_i(0)&=(-1)^{-j}4\pi H(0,0) \sum\limits_{i=1}^k(-1)^{i }\al_i = (-1)^{ k-j}8k\pi H(0,0),
\end{align}
 because by \eqref{alfa} we easily deduce
\begin{align}\label{somma}\sum\limits_{i=1}^k(-1)^{i }\al_i =(-1)^k2k.\end{align}

 \medskip
In order to estimate $\Theta_j$ we need to 
  introduce the following set of shrinking annulus.

For any $j=1,\dots,k$ we   set
\begin{equation}\label{anelli}
 A_j:=\left\{x\in\Omega\ :\ \sqrt {\de_{j-1}\de_j}\le |x|\le\sqrt {\de_j\de_{j+1}}\right\},\ j=1,\dots,k
\end{equation}
where we set $\de_0:=0$ and $\de_{k+1}:=+\infty.$

We point out that if $j,\ell=1,\dots,k$
$${A_j\over\de_\ell}=\left\{y \in{\Omega\over\de_\ell}\ :\ {\sqrt {\de_{j-1}\de_j}\over\de_\ell}\le |y|\le{\sqrt {\de_j\de_{j+1}}\over\de_\ell}\right\}$$
and so {\em roughly speaking}
 ${A_j\over\de_\ell}$ shrinks to the origin if $\ell<j,$ ${A_j\over\de_j}$ invades the whole space $\rr^2$  and
 $ {A_j\over\de_\ell}$ runs off to infinity if $\ell>j.$

For sake of simplicity, we set $w_i:= w_{\de_i}^{\al_i}(x) .$ 
By the maximum principle we easily deduce that
\begin{lemma}\label{pwi-lem}
\begin{align}\label{pro-exp}
 P  w_i(x)=& w_i(x)-\ln\(2\al_i^2\de_i^{\al_i}\)+h_i(x)+O\(\de_i^{\al_i}\)\nonumber\\ =&-2\ln\(\de_i^{\al_i}+|x|^{\al_i}\)+h_i(x)+O\(\de_i^{\al_i}\)  
\end{align}
and for any $i,j=1,\dots,k$
\begin{equation}\label{pwi}
  Pw_i(\de_j y)=\left\{\begin{aligned}
  &-2\al_i\ln\(\de_j|y|\) +h_i(0)\\
  &\qquad +O\({1\over|y|^{\al_i}}\({\de_i\over\de_j}\)^{\al_i}\)+O\(\de_j|y|\)+O\(\de_i^{\al_i}\)&\ \hbox{if}\ i<j,\\
  & \\
& - 2\al_i\ln\de_i  -2\ln(1+|y|^{\al_i})+h_i(0)\\
  &\qquad+O\(\de_i|y|\)+O\(\de_i^{\al_i}\)&\ \hbox{if}\ i=j,\\
  & \\
&-2\al_i \ln\de_i+h_i(0)\\
  &\qquad+O\({ |y|^{\al_i}}\({\de_j\over\de_i}\)^{\al_i}\)+O\(\de_j|y|\)+O\(\de_i^{\al_i}\)&\ \hbox{if}\ i>j.\\
 \end{aligned}\right.\end{equation}
 Here $h_i(x):=4\pi\al_i H(x,0).$
\end{lemma}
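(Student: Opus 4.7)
The plan is to establish the first formula by a maximum principle argument and then derive the case-wise expansion \eqref{pwi} by substituting $x=\de_j y$ and expanding the logarithm in three regimes.

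To prove the first identity, I compare $Pw_i$ with the candidate profile $w_i-\ln(2\al_i^2\de_i^{\al_i})+h_i$. The difference is harmonic in $\Omega$, since $\Delta Pw_i=\Delta w_i$ by the definition \eqref{pro} of the projection and since $H(\cdot,0)$ is the regular part of the Green's function, hence harmonic. On $\partial\Omega$ we have $|x|\ge c>0$, so factoring $|x|^{\al_i}$ inside the log of \eqref{walfa} yields
$$w_i(x)=\ln(2\al_i^2\de_i^{\al_i})-2\al_i\ln|x|+O(\de_i^{\al_i}).$$
Moreover, since $G(x,0)=0$ on $\partial\Omega$, we have $H(x,0)=\tfrac{1}{2\pi}\ln|x|$ there, so $h_i(x)=2\al_i\ln|x|$ on $\partial\Omega$. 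The boundary trace of the comparison difference is therefore $O(\de_i^{\al_i})$, and the maximum principle promotes this to a uniform bound on $\overline\Omega$, which gives the first line of \eqref{pro-exp}. The second line is an algebraic rewriting using $w_i(x)-\ln(2\al_i^2\de_i^{\al_i})=-2\ln(\de_i^{\al_i}+|x|^{\al_i})$.

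For the case-wise expansion \eqref{pwi}, I substitute $x=\de_j y$ into
$$Pw_i(x)=-2\ln(\de_i^{\al_i}+|x|^{\al_i})+h_i(x)+O(\de_i^{\al_i})$$
to get $Pw_i(\de_j y)=-2\ln(\de_i^{\al_i}+\de_j^{\al_i}|y|^{\al_i})+h_i(\de_j y)+O(\de_i^{\al_i})$. A Taylor expansion of $h_i$ at the origin yields $h_i(\de_j y)=h_i(0)+O(\de_j|y|)$. The three cases then arise by factoring the dominant term inside the logarithm and invoking $\ln(1+t)=O(t)$ for small $t$ (with smallness guaranteed by \eqref{para2}). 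When $i<j$, I factor out $\de_j^{\al_i}|y|^{\al_i}$ and apply the bound with $t=(\de_i/(\de_j|y|))^{\al_i}$; when $i=j$, factoring out $\de_i^{\al_i}$ reproduces exactly $-2\al_i\ln\de_i-2\ln(1+|y|^{\al_i})$; when $i>j$, I factor out $\de_i^{\al_i}$ and apply the bound with $t=(\de_j|y|/\de_i)^{\al_i}$. Assembling these with the expansion of $h_i$ gives each of the three formulae.

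The argument is essentially mechanical once the harmonic comparison has been set up, so I do not expect a genuine obstacle. The only point requiring some care is the bookkeeping of three independent sources of error: the harmonic correction $O(\de_i^{\al_i})$, the smooth remainder $O(\de_j|y|)$ from the expansion of $h_i$, and the subdominant logarithmic contribution, which is $O((\de_i/\de_j)^{\al_i}|y|^{-\al_i})$ when $i<j$ and $O((\de_j/\de_i)^{\al_i}|y|^{\al_i})$ when $i>j$. Keeping these three error types separated so that they match the precise statement of \eqref{pwi} is the only fiddly part; none of them can be absorbed into the others, because in later sections these estimates are evaluated on the shrinking annuli $A_\ell/\de_j$ introduced in \eqref{anelli}, where $|y|$ can be either very large or very small.
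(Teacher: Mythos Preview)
Your proposal is correct and follows exactly the approach the paper indicates: the paper offers no detailed proof, merely the sentence ``By the maximum principle we easily deduce that\ldots'', and your harmonic-comparison argument is the standard way to unpack that remark. One tiny remark: the bound $0\le\ln(1+t)\le t$ holds for all $t\ge0$, so your invocation of \eqref{para2} for ``smallness'' is not actually needed to justify $\ln(1+t)=O(t)$, which makes the case-wise expansion valid pointwise for all relevant $y$.
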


Now, we are in position to prove the following crucial estimates.

\begin{lemma}\label{teta} Assume \eqref{alfai} and \eqref{deltai}. 
For any $j=1,\dots,k$ we have

\begin{equation}\label{er4}
\left|\Theta_j(y)\right| =O\(\delta_j|y|  +\lambda\)\quad\hbox{for any}\  {y\in {A_j\over \de_j} }
 \end{equation}
and in particular
\begin{equation}\label{er4.1}
\sup\limits_{y\in {A_j\over \de_j} }\left|\Theta_j(y)\right| =O(1). \end{equation}

\end{lemma}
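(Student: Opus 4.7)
The plan is to plug the expansions from Lemma \ref{pwi-lem} into the definition \eqref{tetaj} of $\Theta_j$ and exploit \eqref{alfai}--\eqref{deltai} to annihilate every non-remainder contribution. Concretely, I would split
$$\Theta_j(y)=\bigl[Pw_j(\de_j y)-w_j(\de_j y)\bigr]-(\al_j-2)\ln|\de_j y|+\sum_{i<j}(-1)^{i-j}Pw_i(\de_j y)+\sum_{i>j}(-1)^{i-j}Pw_i(\de_j y)+\ln\la$$
and apply \eqref{pwi} in the three cases $i<j$, $i=j$, $i>j$ (together with the explicit identity $w_j(\de_j y)=\ln(2\al_j^2)-\al_j\ln\de_j-2\ln(1+|y|^{\al_j})$). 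A direct expansion shows that the coefficient of $\ln|y|$ in the non-remainder part is exactly $-(\al_j-2)-2\sum_{i<j}(-1)^{i-j}\al_i$, which vanishes by \eqref{alfai}, while the remaining $y$-independent constant coincides with the left-hand side of \eqref{deltai} and hence also vanishes. This is precisely the reason for which \eqref{alfai}--\eqref{deltai} were imposed.

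What survives is a sum of three kinds of remainders: the isotropic $O(\de_j|y|)$ (already in the target), the ``tail'' terms $O(\de_i^{\al_i})$, and the anisotropic pieces $O\bigl((\de_i/\de_j)^{\al_i}|y|^{-\al_i}\bigr)$ for $i<j$ and $O\bigl((\de_j/\de_i)^{\al_i}|y|^{\al_i}\bigr)$ for $i>j$. Iterating \eqref{delta2}--\eqref{delta3} gives $\de_i^{\al_i}=O(\la^{2(k-i)+1})=O(\la)$. On $A_j/\de_j$ we have $\sqrt{\de_{j-1}/\de_j}\le|y|\le\sqrt{\de_{j+1}/\de_j}$, so the $i<j$ remainders are dominated by $\de_i^{\al_i}\de_j^{-\al_i/2}\de_{j-1}^{-\al_i/2}$ and the $i>j$ remainders by $\de_j^{\al_i/2}\de_{j+1}^{\al_i/2}\de_i^{-\al_i}$. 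A direct calculation using the explicit exponents $\al_i=4i-2$ from \eqref{alfa} together with the scaling \eqref{para} (equivalently \eqref{para2}) shows that each such expression is $O(\la^{\gamma_{i,j}})$ with $\gamma_{i,j}\ge 1$, which yields \eqref{er4}.

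Finally, \eqref{er4.1} follows from \eqref{er4} by noting that $\de_j|y|\le\sqrt{\de_j\de_{j+1}}\to 0$ on $A_j/\de_j$ in view of \eqref{para2}. The main obstacle is the bookkeeping of the middle step: on $A_j/\de_j$ the variable $y$ can simultaneously approach $0$ (through the ratio $\sqrt{\de_{j-1}/\de_j}$) and $\infty$ (through $\sqrt{\de_{j+1}/\de_j}$), and each direction is the worst direction for a different anisotropic remainder. Verifying that \emph{every} exponent $\gamma_{i,j}$ is at least $1$ is exactly what pins down the precise ansatz \eqref{alfa}--\eqref{para}; any other choice of $\al_i$ or scaling of $\de_i$ would leave a non-vanishing obstruction.
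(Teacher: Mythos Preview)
Your argument is correct and follows essentially the same line as the paper's own proof: expand via Lemma~\ref{pwi-lem}, cancel the logarithmic and constant main terms using \eqref{alfai}--\eqref{deltai}, and then bound each remainder on $A_j/\de_j$ with the scaling \eqref{para} to get $O(\la)$. One small slip in the last step: for $j=k$ the convention $\de_{k+1}=+\infty$ makes the inequality $\de_j|y|\le\sqrt{\de_j\de_{j+1}}$ vacuous, and you should instead use $\de_k|y|=|x|\le\mathrm{diam}(\Omega)=O(1)$, which is all that \eqref{er4.1} requires.
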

\begin{proof} 
First of all, it is useful to estimate the projection $Pw_i.$

By Lemma \ref{pwi-lem}  (also using the mean value theorem $h_j \(\de_j|y|\)=h_j(0)+O\(\de_j|y|\)$), by \eqref{alfai} and by \eqref{deltai} we
deduce
\begin{align*}
  \Theta_j(y)   = &  \[ -\al_j\ln\de_j -\ln(2\al_j^2)+h_j(0)+O\(\de_j|y|\)+O\(\de_j^{\al_j}\)\]-(\al_j-2)\ln|\de_j y| \nonumber
\\   &
+\sum\limits_{i< j} (-1)^{i-j}\[-2\al_i\ln\(\de_j|y|\)
+h_i(0)+O\({1\over|y|^{\al_i}}\({\de_i\over\de_j}\)^{\al_i}\)+O\(\de_j|y|\)+O\(\de_i^{\al_i}\)\]
\nonumber
\\   &+\sum\limits_{i> j} (-1)^{i-j}\[-2\al_i \ln\de_i+h_i(0)+O\({ |y|^{\al_i}}\({\de_j\over\de_i}\)^{\al_i}\)+O\(\de_j|y|\)+O\(\de_i^{\al_i}\)\]\nonumber
\\   &+\ln\la \nonumber
\\
 = & \underbrace{ \[ -  \al_j\ln\de_j -2\sum\limits_{i> j} (-1)^{i-j} \al_i \ln\de_i-\ln(2\al_j^2)+\sum\limits_{i=1}^k(-1)^{i-j}h_i(0)+\ln\la\]}_{=\ 0\ \hbox{because of}\ \eqref{deltai}}
 \nonumber
\\   &-\underbrace{\[(\al_j-2)+2\sum\limits_{i< j} (-1)^{i-j} \al_i\]}_{=\ 0\ \hbox{because of}\ \eqref{alfai}}\ln\(\de_j|y|\)\nonumber
\\  &+O\(\de_j|y|\)+\sum\limits_{i= 1}^kO\(\de_i^{\al_i}\)+\sum\limits_{i< j}O\({1\over|y|^{\al_i}}\({\de_i\over\de_j}\)^{\al_i}\)
+\sum\limits_{i> j}  O\({
|y|^{\al_i}}\({\de_j\over\de_i}\)^{\al_i}\) \nonumber
\\   = & O\(\de_j|y|\)+\sum\limits_{i= 1}^kO\(\de_i^{\al_i}\)+\sum\limits_{i< j}O\({1\over|y|^{\al_i}}\({\de_i\over\de_j}\)^{\al_i}\)
+\sum\limits_{i> j}  O\({
|y|^{\al_i}}\({\de_j\over\de_i}\)^{\al_i}\).
\end{align*}

 By \eqref{para} we  deduce that
$$ O\(\de_i^{\al_i}\)=O\(\la^{2k-2i+1}\)=O\(\la\)\ \hbox{because}\ 1\le i\le k.$$
Moreover,   if $y\in {A_j\over \de_j}$ then $\sqrt
{\de_{j-1}\over\de_j}\le |y|\le \sqrt {\de_{j+1}\over\de_j}$
and so
 if $j=2,\dots,k$ and $i<j$ we have
\begin{align*}
 O\({1\over|y|^{\al_i}}\({\de_i\over\de_j}\)^{\al_i}\)=&O\( \({\de_i^2\over\de_{j-1}\de_j}\)^
{\al_i\over2}\)=O\( \({\de_{j-1}\over\de_j}\)^{\al_i\over2}\)=
O\(\la^{\frac{ 2k}{4(j-1)^2-1}(2i-1)}\) \\  =&O\(\la^{\frac{2k}{2k-1}}\)=O\(\la\) ,\end{align*}
(since the minimum of   $\lambda$'s exponent is achieved when $i=j-1$ and $j=k  $)
and if $j=1,\dots,k-1$ and $i>j$ we have
 \begin{align*}
O\({ |y|^{\al_i}}\({\de_j\over\de_i}\)^{\al_i}\)= &O\(
\({\de_{j+1}\de_j\over\de_i^2}\)^{\al_i\over2}\)= O\(
\({\de_j\over\de_{j+1}}\)^{\al_i\over2}\)=
O\(\la^{\frac{ 2k}{4j^2-1}(2i-1)}\)\\
=&O\(\la^{\frac{2k}{2k-3}}\)=O\(\la\), \end{align*}
 (since the minimum of   $\lambda$'s exponent is achieved when $i=j+1$ and $j=k-1 $).
 Collecting all the previous estimates, we get \eqref{er4}.

 Estimate \eqref{er4.1} follows immediately by \eqref{er4}, because if $y \in {A_j\over \de_j}$ then
 $\delta_j |y|=O(1).$
\end{proof}

In the following, we will denote by
$$\|u\|_p:=\(\int\limits_\Omega |u(x)|^pdx\)^{1\over p}\quad \hbox{and}\quad\|u\| :=\(\int\limits_\Omega |\nabla u(x)|^2dx\)^{1\over 2}$$
the usual norms in the Banach spaces $\mathrm{L}^p(\Omega)$ and $\mathrm{H}^1_0(\Omega),$ respectively.

\section{Estimate of the error term}\label{due}

In this section we will estimate the two following error terms
 
\begin{equation}\label{rla}
 \mathcal{R}_\la(x):=-\Delta W_\la(x)-\la f\(W_\la(x)\),\ x\in\Omega \end{equation}
 
\begin{equation}\label{sla}
 \mathcal{S}_\la(x):=\la f'\(W_\la(x) \)-\sum\limits_{i=1}^k 2\alpha_i^2{|x|^{\alpha_i-2}\over (\de_i^{\alpha_i}+|x|^{\alpha_i})^2},\ x\in\Omega.\end{equation}

Here $f(s):=e^s-e^{-s}.$

\begin{lemma}\label{errore} Let $ \mathcal{R}_\la$ as in \eqref{rla}.
There exists $\eps>0$ such that for any $p\in[1,1+\eps)$ we have
$$\|\mathcal{R}_\la\|_{p}= O\(\la^{2-p\over2p(2k-1)}\).$$
\end{lemma}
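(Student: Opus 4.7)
The plan is to partition $\Omega=\bigcup_{j=1}^k A_j$ using the annuli \eqref{anelli} (with the conventions $\delta_0=0$, $\delta_{k+1}=+\infty$) and to estimate $\mathcal{R}_\lambda$ separately on each piece, exploiting the fact that on $A_j$ the $j$-th bubble is the dominant one. On each $A_j$ I would perform the rescaling $x=\delta_j y$, so that the $j$-th bubble becomes of unit scale in $y$. By the very definition \eqref{tetaj} of $\Theta_j$ we have $(-1)^j W_\lambda(\delta_j y)=w_j(\delta_j y)+(\alpha_j-2)\ln|\delta_j y|-\ln\lambda+\Theta_j(y)$, which after exponentiating and multiplying by $\lambda$ gives the key identity
$$\lambda\, e^{(-1)^j W_\lambda(\delta_j y)}=\frac{2\alpha_j^2\,|\delta_j y|^{\alpha_j-2}\delta_j^{\alpha_j}}{(\delta_j^{\alpha_j}+|\delta_j y|^{\alpha_j})^2}\,e^{\Theta_j(y)}.$$
Since on $A_j$ the function $(-1)^j W_\lambda$ is positive and large (its leading piece is $w_j$), this is the dominant half of $(-1)^j\lambda f(W_\lambda)$, and up to the factor $e^{\Theta_j}$ it exactly cancels the $i=j$ summand in
$$-\Delta W_\lambda(x)=\sum_{i=1}^k(-1)^i\,\frac{2\alpha_i^2\,|x|^{\alpha_i-2}\delta_i^{\alpha_i}}{(\delta_i^{\alpha_i}+|x|^{\alpha_i})^2}.$$

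I would then bound, on $A_j$,
$$|\mathcal{R}_\lambda(\delta_j y)|\le\underbrace{\frac{2\alpha_j^2\,|\delta_jy|^{\alpha_j-2}\delta_j^{\alpha_j}}{(\delta_j^{\alpha_j}+|\delta_jy|^{\alpha_j})^2}\,|1-e^{\Theta_j(y)}|}_{\text{(I)}}+\underbrace{\sum_{i\ne j}\frac{2\alpha_i^2\,|\delta_jy|^{\alpha_i-2}\delta_i^{\alpha_i}}{(\delta_i^{\alpha_i}+|\delta_jy|^{\alpha_i})^2}}_{\text{(II)}}+\underbrace{\lambda\, e^{-(-1)^j W_\lambda(\delta_jy)}}_{\text{(III)}}.$$
For (I), Lemma \ref{teta} gives $|1-e^{\Theta_j(y)}|\le C|\Theta_j(y)|\le C(\delta_j|y|+\lambda)$ on $A_j/\delta_j$. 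For (II), on $A_j/\delta_j$ the ratio $|\delta_jy|/\delta_i$ is very large for $i<j$ and very small for $i>j$ thanks to \eqref{para2}, so the $i$-th bubble reduces to $\delta_i^{\alpha_i}|\delta_jy|^{-\alpha_i-2}$ or $|\delta_jy|^{\alpha_i-2}\delta_i^{-\alpha_i}$ respectively; both are of smaller order than the $j$-th bubble. Term (III) inherits an extra factor $\lambda$ from the identity above and is handled similarly.

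The change of variables $x=\delta_jy$ (Jacobian $\delta_j^2$) converts the $L^p$-norm of (I) on $A_j$ into
$$\delta_j^{2-2p}\int_{A_j/\delta_j}\!\left(\frac{2\alpha_j^2|y|^{\alpha_j-2}}{(1+|y|^{\alpha_j})^2}\right)^{\!p}\!(\delta_j|y|+\lambda)^p\,dy\le C\bigl(\delta_j^{2-p}+\delta_j^{2-2p}\lambda^p\bigr),$$
provided $p\in[1,1+\varepsilon)$, so that the $y$-integrals converge uniformly in $\lambda$ at $0$ and at $\infty$ (the infinity-end needs $p>2/(\alpha_j+1)$, which is automatic since $\alpha_j\ge 2$). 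Taking $p$-th roots and using \eqref{para}, a direct calculation shows that $\delta_j^{(2-p)/p}=d_j^{(2-p)/p}\lambda^{(2(k-j)+1)(2-p)/(p(4j-2))}$ is an increasing function of $j$, so its maximum is achieved at $j=k$ and equals $d_k^{(2-p)/p}\lambda^{(2-p)/(2p(2k-1))}$, which is exactly the bound claimed. The remaining pieces (the $\lambda^p$-part of (I), (II), (III), and the whole of (I) for $j<k$) can all be checked to be of strictly smaller order as long as $p$ is close enough to $1$.

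The main obstacle is the bookkeeping: for each pair $(i,j)$ of bubble indices and on each annulus $A_j$ one has to recognise whether $|\delta_jy|$ is small or large compared to $\delta_i$, use the corresponding asymptotics, and then verify that the sum of all these contributions together with (III) is dominated by $\delta_k^{(2-p)/p}$. Lemma \ref{teta}, which relies crucially on the choices \eqref{alfai}--\eqref{deltai} of the parameters, is precisely what makes the $i=j$ cancellation in (I) work and reduces the leading term to the harmless factor $1-e^{\Theta_j}$; without this cancellation no $L^p$-estimate of the required order could be obtained.
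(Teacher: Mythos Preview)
Your proposal is correct and follows essentially the same route as the paper: the decomposition $\Omega=\bigcup_j A_j$, the identity $\lambda e^{(-1)^jW_\lambda(\delta_jy)}=|x|^{\alpha_j-2}e^{w_j(x)}e^{\Theta_j(y)}$, the three-term split (your (I), (II), (III) are exactly the paper's $I_1$, $I_3$, $I_2$), and the use of Lemma~\ref{teta} to control $|1-e^{\Theta_j}|$ all coincide with the paper's argument. The paper just carries out the ``bookkeeping'' you allude to in full detail, confirming that (II) and (III) are $O(\lambda^{2k/3})$ and hence of strictly smaller order than the main term $\delta_k^{(2-p)/p}=O(\lambda^{(2-p)/(2p(2k-1))})$ coming from (I) at $j=k$.
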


\begin{proof}
 First of all we observe that
\begin{equation}\label{er1}
\mathcal{R}_\la(x)=\sum\limits_{i=1}^k(-1)^i|x|^{\al_i-2}e^{w_i(x)}-\la
e^{\sum\limits_{i=1}^k(-1)^iP{w_i(x)}}+\la
e^{\sum\limits_{i=1}^k(-1)^{i+1}P{w_i(x)}}
\end{equation}
Then, using that if $x\in A_j$ then we can write
$$\mathcal{R}_\la(x)=
\left\{\begin{aligned}
&|x|^{\al_j-2}e^{w_j(x)}-\la e^{Pw_j(x)+\sum\limits_{i=1\atop i\not= j}^k(-1)^{i-j}P{w_i(x)}}+\la e^{-Pw_j(x) -\sum\limits_{i=1\atop i\not= j}^k(-1)^{i-j}P{w_i(x)}}\\   &\quad +\sum\limits_{i =1\atop i\not=j }^k (-1)^i  |x|^{\al_i-2}e^{w_i(x)}  \quad \hbox{if $j$ is even,}\\
&-|x|^{\al_j-2}e^{w_j(x)}+\la
e^{Pw_j(x)+\sum\limits_{i=1\atop i\not=
j}^k(-1)^{i-j}P{w_i(x)}}-\la e^{-Pw_j(x) -\sum\limits_{i=1\atop
i\not= j}^k(-1)^{i-j}P{w_i(x)}}\\  &\quad +\sum\limits_{i =1\atop
i\not=j }^k (-1)^i  |x|^{\al_i-2}e^{w_i(x)}  \quad \hbox{if $j$ is
odd.}\\
\end{aligned}\right.$$
we have
\begin{eqnarray}\label{er2}
&
&\int\limits_\Omega|\mathcal{R}_\la(x)|^pdx=\sum\limits_{j=1}^k\int\limits_{A_j}|R_\la(x)|^pdx
\nonumber
\\
&&\le C\sum\limits_{j=1}^k \int\limits_{A_j}\left|
|x|^{\al_j-2}e^{w_j(x)}-\la e^{Pw_j(x)+\sum\limits_{i=1\atop
i\not= j}^k(-1)^{i-j}P{w_i(x)}}\right|^pdx \nonumber
\\
&&+C\sum\limits_{j=1}^k \int\limits_{A_j}\left|\la
e^{-Pw_j(x)+\sum\limits_{i=1\atop i\not= j}
^k(-1)^{i-j+1}P{w_i(x)}}\right|^pdx \nonumber
\\
&&+C\sum\limits_{i,j=1\atop i\not=j }^k \int\limits_{A_j}\left|
|x|^{\al_i-2}e^{w_i(x)} \right|^pdx=: I_1+I_2+I_3.
\end{eqnarray}

\medskip
Let us estimate $I_1.$ For any $j=1,\dots,k$ we have
\begin{align}\label{er3b} 
&   \int\limits_{A_j}\left| |x|^{\al_j-2}e^{w_j(x)}-\la
e^{Pw_j(x)+\sum\limits_ {i=1\atop i\not=
j}^k(-1)^{i-j}P{w_i(x)}}\right|^pdx \nonumber
\\
&   = \int\limits_{A_j}  |x|^{(\al_j-2)p}e^{pw_j(x)}\left|1-
e^{Pw_j(x)-w_j(x)-(\al_j-2)\ln|x|+\sum\limits_{i\not = j}
(-1)^{i-j}P{w_i(x)} +\ln\la} \right|^pdx \nonumber
\\
& =C\de_j^{2-2p}\int\limits_{A_j\over \de_j}
\frac{|y|^{(\al_j-2)p}}{\(1+|y|^{\al_j}\)^{2p}} \left|1-
e^{Pw_j(\de_j y)-w_j(\de_j y)-(\al_j-2)\ln|\de_j
y|+\sum\limits_{i\not= j} (-1)^{i-j}P{w_i(\de_j y)} +\ln\la}
\right|^pdy \nonumber
\\ &   \hbox{(we use that $e^t-1=e^{\theta t}t$  for some
$\theta\in(0,1) $ and we   use Lemma \ref{teta})}\nonumber
\\ &
=O\(\de_j^{2-2p}\int\limits_{A_j\over \de_j}\frac{|y|^{(\al_j-2)p}}{\(1+|y|^{\al_j}\)^{2p}}\left|
 \Theta_j(y)\right|^pdy\)=\nonumber
\\ &  =O\(\de_j^{2-2p}\int\limits_{A_j\over \de_j}\frac{|y|^{(\al_j-2)p}}{\(1+|y|^{\al_j}\)^{2p}}\left|
 \delta_j |y|+\lambda\right|^pdy\)=
O\(\de_j^{2-2p}\lambda^p\)+O\(\de_j^{2- p}\)\nonumber \\  &=O\(\de_1^{2-2p}\lambda^p\)+O\(\de_k^{2- p}\)=
O\(\la^{p+(1-p)(2k-1)}\)+O\(\la^{2-p\over2(2k-1)}\)=\nonumber
\\ &  =O\(\la^{2-p\over2(2k-1)}\),
\end{align}

provided $p$ is close enough to $1.$
Therefore, we get
\begin{equation}\label{i1}
 I_1 =O\(\la^{2-p\over2(2k-1)}\).\end{equation}

 \medskip
Let us estimate $I_2$.   For any $j=1,\dots,k$,
\begin{align}\label{er5}
&  \int\limits_{A_j}\left|\la e^{-Pw_j(x)+\sum\limits_{i=1\atop i\not= j}^k(-1)^{i-j+1}P{w_i(x)}}\right|^pdx\nonumber\\
&    = \la^{2p}\de_j^2\int\limits_
{\sqrt {\de_{j-1}\over\de_j}\le |y|\le \sqrt {\de_{j+1}\over\de_j}}  e^{p(- w_j(\de_j y)-\(\al_j-2\)\ln|\de_j y| -\Theta_j(  y))} dy\nonumber\\
 & = C\la^{2p}\de_j^{2+2p}\int\limits_
{\sqrt {\de_{j-1}\over\de_j}\le |y|\le \sqrt {\de_{j+1}\over\de_j}}{\(1+|y|^{\al_j}\)^{2p}\over|y|^{(\al_j-2)p}} e^{ -p\Theta_j(  y)} dy
\nonumber\\
&  = O\( \la^{2p}\de_j^{2+2p}
\int\limits_
{\sqrt {\de_{j-1}\over\de_j}\le |y|\le \sqrt {\de_{j+1}\over\de_j}}{\(1+|y|^{\al_j}\)^{2p}\over|y|^{(\al_j-2)p}} dy\)\nonumber\\
&  \hbox{(we agree that  $\delta_0=0$ and $\delta_{k+1}=+\infty$})\nonumber\\
& = O\(  \la^{2p}\de_j^{2+2p}\[\({\de_{j+1}\over\de_j}\)^{p{\al_j+2\over2} +1}+\({\de_j\over\de_{j-1}}\)^{p{\al_j-2\over2} -1}\]\)\nonumber\\ &
 \hbox{(if $j=1$ only the first term appears, while if $j=k$ only  the second term appears})\nonumber\\
& = O\(  \la^{2p}\de_j^{2+2p}\[\({\de_{j+1}\over\de_j}\)^{2jp-1}+\({\de_j\over\de_{j-1}}\)^{(2j-2)p-1}\]\)\nonumber\\
 & \hbox{(since the best rate is obtained as }j=2)\nonumber\\
& =O\(  \la^{2p}\de_2^{2+2p}\[\({\de_3\over\de_2}\)^{4p+1}+\({\de_2\over\de_1}\)^{2p-1}\]\)\nonumber\\
&  \(\hbox{since }\frac{2k}{15}(4p+1)<\frac{2k}3(2p-1)\)\nonumber\\
& =O\(  \la^{2p+\frac{2k-3}3(1+p)-\frac{2k}3(2p-1)}\)=O\( \la^{\frac{3p-3+2k}3}\) .  \end{align}
Therefore, we get
\begin{equation}\label{i21}
 I_2  = O\( \la^{\frac{2k}3}\) .\end{equation}

 \medskip
Let us estimate $I_3$.  For any $i,j=1,\dots,k$  with $i\not=j$  we have
\begin{align}\label{er6}
  &\int\limits_{A_j}\left|{ |x|^ {\alpha_i-2}\over(\de_i^{\al_i}+|x|^{\al_i})^2}\right|^p dx\nonumber\\
  &\hbox{(we scale $x=\delta_i y$)}\nonumber\\
  &=C\de_i^{2-2p} \int\limits_{{\sqrt{\de_{j-1}\de_j}\over \de_i}\le |y|\le {\sqrt{\de_j\de_{j+1}}\over \de_i}}
  {|y|^{(\al_i-2)p}\over \(1+|y|^{\al_i}\)^{2p}} dy\nonumber \\
   &=
  \left\{
  \begin{aligned}
&  O\(\de_i^{2-2p}\({ {\sqrt{\de_j\de_{j+1}}}\over \de_i^2}\)^{(\al_i-2)p+2}\)=
O\( \de_i^{2-2p} \({\de_j\over\de_{j+1}}\)^{(2i-2)p+1} \)\\ &\qquad \hbox{if}\  j=1,\dots,k-1\ \hbox{and}\ i>j,\\
& \\
  &O\(\de_i^{2-2p}\({ \de_i\over\sqrt{\de_{j-1}\de_j} }\)^{-(\al_i+2)p+2}\)=O\(\de_i^{2-2p}\({  \de_{j-1}\over \de_j  }\)
  ^{2ip-1}\)\\ &\qquad \hbox{if}\  j=2,\dots,k \ \hbox{and}\  i<j.\\
  \end{aligned}
  \right.
  \nonumber\\
  & \nonumber\\ &
 = \left\{
  \begin{aligned}
&O\( \de_2^{2-2p} \({\de_1\over\de_{2}}\)^{2p+1} \)=O\(\la^{\frac{2kp+4k+3(p-1)}3}\)=O\(\la^  {2k }\),\\
  &O\(\de_1^{2-2p}\({  \de_{1}\over \de_2}\)
  ^{2p-1}\)=O\(\la^{\frac{-2kp+4k+3(p-1)}3}\) =O\(\la^{{2\over3}k+(1-p)\({2\over3}k-1\)}\) \\
  \end{aligned}
  \right.\nonumber\\ & \nonumber\\ &=O\(\la^{\frac{2kp+3(p-1)}3}\).
  \end{align}

Therefore, if $p$ is close enough to $1$ we get
\begin{equation}\label{i31}
 I_3  = O\( \la^{{2\over3}k+(1-p)\({2\over3}k-1\)}\) .\end{equation}

   \end{proof}

\begin{lemma}\label{tec1} Let $ \mathcal{S}_\la$ as in \eqref{sla}.
There exists $\eps>0$ such that for any $p\in[1,1+\eps)$ we have
$$\left\|\mathcal{S}_\la\right\|_{p}= O\(\la^{2-p\over{2p(2k-1)}}\).$$
\end{lemma}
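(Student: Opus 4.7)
My strategy is to mimic the proof of Lemma \ref{errore} almost verbatim. The justification is that $f(s)=e^s-e^{-s}$ and $f'(s)=e^s+e^{-s}$ differ only by a sign on the subdominant exponential, and this sign is invisible to the $L^p$ norm. Furthermore, reading the subtracted sum in \eqref{sla} as $\sum_i|x|^{\alpha_i-2}e^{w_i(x)}=\sum_i 2\alpha_i^2\de_i^{\alpha_i}|x|^{\alpha_i-2}/(\de_i^{\alpha_i}+|x|^{\alpha_i})^2$, the three-term decomposition of $\mathcal{R}_\la$ used in Lemma \ref{errore} carries over directly.

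The first concrete step is to rewrite the defining identity \eqref{tetaj} of $\Theta_j$ in exponential form: for every $j=1,\dots,k$,
$$\lambda\,e^{(-1)^j W_\la(x)}\;=\;|x|^{\alpha_j-2}e^{w_j(x)}\,e^{\Theta_j(x/\delta_j)},\qquad x\in\Omega.$$
Hence on each shrinking annulus $A_j$,
\begin{align*}
\mathcal{S}_\la\big|_{A_j}=\;& |x|^{\alpha_j-2}e^{w_j}\bigl(e^{\Theta_j(x/\delta_j)}-1\bigr)+\lambda\,e^{-(-1)^j W_\la}-\sum_{i\ne j}|x|^{\alpha_i-2}e^{w_i},
\end{align*}
a decomposition that mirrors exactly the one used for $\mathcal{R}_\la$ in the proof of Lemma \ref{errore}.

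Next I would split $\int_\Omega|\mathcal{S}_\la|^p\,dx=\sum_{j=1}^k\int_{A_j}|\mathcal{S}_\la|^p\,dx$ and bound each of the three summands above separately on each $A_j$. The first summand, after the rescaling $x=\de_j y$, is controlled by combining $|e^{\Theta_j}-1|\le C|\Theta_j|$ (valid since $|\Theta_j|=O(1)$ on $A_j/\de_j$ by \eqref{er4.1}) with the sharper pointwise bound $|\Theta_j(y)|=O(\de_j|y|+\lambda)$ from \eqref{er4}; the resulting integral is precisely the one estimated as $I_1$ in the proof of Lemma \ref{errore} and is therefore $O(\lambda^{(2-p)/(2(2k-1))})$. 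The second summand is pointwise the integrand estimated as $I_2$ there, and the third is (a sum of) integrands of the form estimated as $I_3$; both are of strictly higher order in $\lambda$.

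The only non-routine step I anticipate is the algebraic identification reducing $\lambda f'(W_\la)|_{A_j}$ to a main term plus a subdominant exponential; once this is in hand, the proof collapses to the three bounds already proved for Lemma \ref{errore}. Summing over $j$ and taking $p$ close enough to $1$ then yields $\|\mathcal{S}_\la\|_p=O(\lambda^{(2-p)/(2p(2k-1))})$ as claimed, and I do not expect any genuinely new analytic difficulty.
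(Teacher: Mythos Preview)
Your proposal is correct and follows essentially the same approach as the paper. The paper first groups the terms into $J_1=\sum_j\int_{A_j}|\lambda f'(W_\la)-|x|^{\al_j-2}e^{w_j}|^p$ and $J_2=\sum_{i\ne j}\int_{A_j}||x|^{\al_i-2}e^{w_i}|^p$, then expands $\lambda f'(W_\la(\de_jy))$ via $\Theta_j$ to split $J_1$ into the $(e^{\Theta_j}-1)$ piece and the subdominant exponential piece---arriving at exactly your three summands and, like you, invoking the estimates \eqref{er3b}, \eqref{er5}, \eqref{er6} already obtained for $I_1,I_2,I_3$ in Lemma~\ref{errore}.
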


\begin{proof}
By \eqref{anelli} we get
\begin{align*}
&\int\limits_\Omega\left|\mathcal{S}_\la(x)\right|^pdx =
\sum\limits_{j=1}^k\int\limits_{A_j}\left|\mathcal{S}_\la(x)\right|^pdx\\ &=O\(\sum\limits_{j=1}^k \int\limits_{A_j}\left|\la f'\(W_\la \)(x)- 2\alpha_j^2{|x|^{\alpha_j-2}\over (\de_j^{\alpha_j}+|x|^{\alpha_j})^2}\right|^pdx\)\\ &+O\(\sum\limits_{i,j=1\atop i\not=j}^k\int\limits_{A_j}\left| 2\alpha_i^2{|x|^{\alpha_i-2}\over (\de_i^{\alpha_i}+|x|^{\alpha_i})^2}\right|^pdx\):=J_1+J_2
\end{align*}
The integral $J_2$ was estimated in \eqref{er6}:
$$J_2=O\(\la^{{2\over3}k+(1-p)\({2\over3}k-1\)}\).$$

Let us estimated $J_1.$ For any $j=1,\dots,k,$ we will scale $x=\de_j y.$
We observe that by \eqref{tetaj}
 $$W_\la(\de_jy)=(-1)^j\[\Theta_j(y)+w_j(\de_jy)+(\alpha_j-2)\ln|\de_jy|-\ln \la\]$$
 and so
\begin{align*}
&\la f'\(W_\la(\de_jy)\)\\ &= \la e^{(-1)^j\[\Theta_j(y)+w_j(\de_jy)+(\alpha_j-2)\ln|\de_jy|-\ln \la\]}  +
\la e^{(-1)^{j+1}\[\Theta_j(y)+w_j(\de_jy)+(\alpha_j-2)\ln|\de_jy|-\ln \la\]}\\
&=  e^{ \[\Theta_j(y)+w_j(\de_jy)+(\alpha_j-2)\ln|\de_jy| \]} +
\la ^2 e^{-\[\Theta_j(y)+w_j(\de_jy)+(\alpha_j-2)\ln|\de_jy| \]}\\
&={2\alpha_j^2\over \de_j^2}{|y|^{\alpha_j-2}\over (1+|y|^{\alpha_j})^2}e^{\Theta_j(y)}+\la^2\de_j^2 {(1+|y|^{\alpha_j})^2\over 2\alpha_j^2  |y|^{\alpha_j-2} }e^{-\Theta_j(y)},
\end{align*}
from which we deduce taking also into account Lemma \ref{teta}
\begin{align*}
&\int\limits_{A_j}\left|\la f'\(W_\la \)(x)- 2\alpha_j^2{|x|^{\alpha_j-2}\over (\de_j^{\alpha_j}+|x|^{\alpha_j})^2}\right|^pdx\\ &=
 \de_j^2\int\limits_{A_j\over \de_j}\left|\la  f'\(W_\la(\de_jy)\)-{2\alpha_j^2 \over \de_j^2 }{|y|^{\alpha_j-2}\over (1+|y|^{\alpha_j})^2}\right|^pdy\\ &=
 O\(\de_j^{2-2p}\int\limits_{A_j\over \de_j}\left|{2\alpha_j^2 }{|y|^{\alpha_j-2}\over (1+|y|^{\alpha_j})^2}\(e^{\Theta_j(y)}-1\)\right|^pdy\)\\ & +O\(\la^{2p}\de_j^{2+2p} \int\limits_{A_j\over \de_j} \left|{(1+|y|^{\alpha_j})^2\over 2\alpha_j^2  |y|^{\alpha_j-2} }e^{-\Theta_j(y)}\right|^pdy\)\\
&=O\(\de_j^{2-2p}\int\limits_{A_j\over \de_j}\left| {|y|^{\alpha_j-2}\over (1+|y|^{\alpha_j})^2} \Theta_j(y) \right|^pdy\)\\ & +O\(\la^{2p}\de_j^{2+2p} \int\limits_{A_j\over \de_j} \left|{(1+|y|^{\alpha_j})^2\over   |y|^{\alpha_j-2} } \right|^pdy\)\\ &\hbox{(the first term is estimated in \eqref{er3b} and the second term is estimated in \eqref{er5})}\\ &
=O\(\la^{2-p  \over 2(2k-1)}\)+O\(\la^{{2\over3}k }\).
\end{align*}
 Therefore, we get
 $$J_1=O\(\la^{2-p  \over 2(2k-1)}\)+O\(\la^{{2\over3}k }\).$$

Finally, the claim follows collecting all the previous estimates.

\end{proof}
\section{The linear theory}\label{tre}

Let us consider the linear operator
\begin{equation}\label{lla}\mathcal{L}_{\la}(\phi ):=-\Delta \phi  - \(\sum\limits_{i=1}^k 2\alpha_i^2{|x|^{\alpha_i-2}\over (\de_i^{\alpha_i}+|x|^{\alpha_i})^2}\)\phi.\end{equation}

Let us   study  the invertibility of  the linearized operator $\mathcal{L}_{\la}.$

\begin{prop}\label{inv}
For any $p>1$ there exists $\la_0>0$ and $c>0$ such that for any $\la \in(0, \la_0)$ and for any $h\in \mathrm{L}^{p}(\Omega)$ there exists a unique
$\phi\in \mathrm{W}^{2, 2}(\Omega)$ solution of
$$ \mathcal{L}_{\la}(\phi )=\psi\ \hbox{in}\ \Omega,\ \phi=0\ \hbox{on}\ \partial\Omega,
$$
which satisfies $$\|\phi\| \leq c |\ln\la|  \|h\|_{p}.$$
\end{prop}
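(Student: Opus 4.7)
The plan is the classical two-step strategy: first establish the a priori estimate $\|\phi\|\le c|\ln\lambda|\,\|h\|_p$ by a contradiction-and-concentration argument, then deduce existence and uniqueness from the Fredholm alternative applied to $\mathcal{L}_\lambda$ viewed as a compact perturbation of $-\Delta:\mathrm{W}^{2,p}(\Omega)\cap \mathrm{H}^1_0(\Omega)\to \mathrm{L}^p(\Omega)$. Since $\mathcal{L}_\lambda$ preserves the central symmetry $\phi(x)=\phi(-x)$, I would work throughout in the subspace of symmetric functions, which is crucial because it will kill the translation modes in the blow-up limit.

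Suppose for contradiction that the estimate fails: there exist $\lambda_n\to 0$ and symmetric $h_n,\phi_n$ with $\|\phi_n\|=1$, $|\ln\lambda_n|\,\|h_n\|_p\to 0$ and $\mathcal{L}_{\lambda_n}(\phi_n)=h_n$. For each scale $j=1,\dots,k$ I rescale $\tilde\phi_n^j(y):=\phi_n(\delta_{j,n}y)$; by the conformal invariance of the Dirichlet norm in $\mathbb{R}^2$ one has $\|\nabla\tilde\phi_n^j\|_{L^2}=1$, and $\tilde\phi_n^j$ satisfies
\begin{equation*}
-\Delta\tilde\phi_n^j=\frac{2\alpha_j^2|y|^{\alpha_j-2}}{(1+|y|^{\alpha_j})^2}\tilde\phi_n^j+K_n^j(y)\tilde\phi_n^j+\delta_{j,n}^2 h_n(\delta_{j,n}y),
\end{equation*}
where $K_n^j$ collects the contributions of the bubbles $i\neq j$. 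Thanks to the separation of scales \eqref{para2}, $K_n^j\to 0$ locally uniformly on $\mathbb{R}^2\setminus\{0\}$, so passing to the limit via elliptic regularity (with a removable-singularity argument at $y=0$), $\tilde\phi_n^j$ converges locally uniformly along a subsequence to a bounded $\phi_\infty^j\in \dot{\mathrm{H}}^1(\mathbb{R}^2)$ solving the linearized singular Liouville equation
\begin{equation*}
-\Delta\phi_\infty^j=\frac{2\alpha_j^2|y|^{\alpha_j-2}}{(1+|y|^{\alpha_j})^2}\phi_\infty^j\quad\text{in }\mathbb{R}^2.
\end{equation*}
By the known nondegeneracy results (Chen--Li for $\alpha_1=2$, Prajapat--Tarantello for the even integer exponents $\alpha_j=4j-2$) combined with the central symmetry which eliminates the translation and rotation modes, the only possibility is $\phi_\infty^j=c_j Z_0^j$ with the scaling mode $Z_0^j(y):=(1-|y|^{\alpha_j})/(1+|y|^{\alpha_j})$ and some $c_j\in\mathbb{R}$.

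To force every $c_j=0$, I would test the equation $\mathcal{L}_{\lambda_n}(\phi_n)=h_n$ against the projection $PZ_{0,n}^j$ of $Z_0^j(\cdot/\delta_{j,n})$ into $\mathrm{H}^1_0(\Omega)$. An expansion in the spirit of Lemma \ref{pwi-lem} together with the cancellations built into \eqref{alfai}--\eqref{deltai} produces an identity of the form $c_jA_j+o(1)=\int_\Omega h_n\,PZ_{0,n}^j$ with $A_j>0$ a fixed constant; the right hand side is $O(|\ln\lambda_n|\,\|h_n\|_p)=o(1)$, the logarithm entering through the $L^{p'}$ norm of $PZ_{0,n}^j$ and being exactly the source of the loss in the final estimate. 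Once every limit profile vanishes, a Green's-function representation of $\phi_n$ combined with the annular decomposition \eqref{anelli} and the $L^p$-control of $h_n$ upgrades the local convergence $\tilde\phi_n^j\to 0$ to the global convergence $\|\phi_n\|\to 0$, contradicting $\|\phi_n\|=1$. The main obstacle is precisely this last step: the $k$ bubble scales correspond to $k$ genuinely different singular Liouville problems (the exponents $\alpha_i=4i-2$ are pairwise distinct and not related by rescaling), no single global rescaling captures them all, and the matching of annular estimates between consecutive scales has to be carried out with care; this is the point where the new ideas announced in the Introduction simplify the approach of \cite{egp,dkm}. Once the a priori estimate is established, existence and uniqueness follow immediately from the Fredholm alternative.
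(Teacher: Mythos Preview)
Your overall architecture (contradiction, rescaling at each scale, identify the limit as a multiple of the radial kernel element, then rule it out) matches the paper. However, the key step where you force $c_j=0$ has a genuine gap.

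Testing against $PZ_{0,n}^j$ alone does \emph{not} produce an identity of the form $c_jA_j+o(1)=\int h_n\,PZ_{0,n}^j$. Since $PZ_i-Z_i=1+O(\delta_i^{\alpha_i})$ (it is bounded, so the logarithm does \emph{not} enter through its $L^{p'}$ norm as you claim), the diagonal term becomes, after rescaling, $\int 2\alpha_i^2\frac{|y|^{\alpha_i-2}}{(1+|y|^{\alpha_i})^2}\phi^i(y)\,dy$, whose limit is $\gamma_i\int 2\alpha_i^2\frac{|y|^{\alpha_i-2}}{(1+|y|^{\alpha_i})^2}\frac{1-|y|^{\alpha_i}}{1+|y|^{\alpha_i}}\,dy=0$ by the orthogonality relation \eqref{ex1}. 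The off-diagonal terms vanish for the same reason. You obtain the tautology $0=o(1)$ and learn nothing about $c_j$.

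The paper circumvents this with a two-layer test. First it multiplies the $PZ_i$ identity by $\ln\lambda$, which turns the diagonal term into the auxiliary quantity $\sigma_i(\lambda):=\ln\lambda\int 2\alpha_i^2\frac{|y|^{\alpha_i-2}}{(1+|y|^{\alpha_i})^2}\phi^i(y)\,dy$ and produces a triangular system $\sigma_i+2\sum_{j<i}\sigma_j=o(1)$, forcing every $\sigma_i\to0$. Only then does it test against $Pw_i$ (the projected bubble, \emph{not} a kernel element); here $\|Pw_i\|_\infty=O(|\ln\lambda|)$ is the true source of the logarithmic loss, and the computation yields a second triangular system $\gamma_i+2\sum_{j>i}\gamma_j=0$, whence all $\gamma_i=0$. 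The prior control of the $\sigma_i$'s is essential because terms of the form $\ln\delta_j\cdot\int(\cdots)\phi^j$ appear in the $Pw_i$ computation and would otherwise be indeterminate.

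A secondary difference: the paper avoids your Green's-function upgrade entirely by working in the weighted spaces $\mathrm{H}_{\alpha_j}(\rr^2)\hookrightarrow\mathrm{L}_{\alpha_j}(\rr^2)$ with the compact embedding of Proposition~\ref{compact}. Once $\gamma_j=0$, strong $\mathrm{L}_{\alpha_j}$ convergence of $\phi^j$ to $0$ is already in hand, and multiplying the equation by $\phi_n$ gives $1=\sum_j\int 2\alpha_j^2\frac{|y|^{\alpha_j-2}}{(1+|y|^{\alpha_j})^2}(\phi^j)^2\,dy+o(1)=o(1)$ in one line. This is precisely the simplification over \cite{egp,dkm} alluded to in the Introduction, and it sidesteps the annular-matching difficulty you flag as the main obstacle.
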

\begin{proof}
We argue by contradiction. Assume there exist $p>1,$ sequences $\la_n\to0,$ $\psi_n\in \mathrm{L}^{\infty}(\Omega)$ and $\phi_n\in \mathrm{W}^{2, 2}(\Omega)$
such that
\begin{equation}\label{inv1}
-\Delta \phi_n  -\sum\limits_{i=1}^k2\alpha_i^2{ {\de_i}_n ^{\alpha_i}|x|^{\alpha_i-2}\over ({\de_i}_n ^{\alpha_i}+|x|^{\alpha_i})^2}\phi_n=\psi_n,\ \hbox{in}\ \Omega,\ \phi_n=0\ \hbox{on}\ \partial\Omega,
\end{equation}
with ${\de_1}_n ,\dots,{\de_k}_n  $   defined as in  \eqref{para} and
\begin{equation}\label{inv2}
\|\phi_n\| =1\quad\hbox{and}\quad    |\ln\la_n|  \|\psi_n\|_p\to0.\end{equation}

For any $j=1,\dots,k$ we define
 $  \phi^j_n(y):=\phi_n\({\de_i}_n y\) $ with $y\in \Omega^j_n:={\Omega\over \de_n^j}.$

For sake of simplicity, in the following we will omit the index $n$ in all the sequences.

{\em Step 1: we will show that
\begin{equation}\label{step1}  \phi^j (y)\to \gamma_j {1-|y|^{\al_j}\over1+|y|^{\al_j}}\ \hbox{  for some $ \gamma_j\in\rr.$} \end{equation}
weakly in $\mathrm{H}_{\al_j}(\rr^2)$ and strongly in $\mathrm{L}_{\al_j}(\rr^2) $ (see \eqref{hjs} and \eqref{hjs}).}

First of all we claim that each $\phi^j$ is bounded in the space $\mathrm{H}_{\al_j}(\rr^2)$
defined in \eqref{hjs}.

Indeed,
  if we multiply  \eqref{inv1} by $\phi$ we deduce that for any $j$
 \begin{align*}  \int\limits_{\Omega }2\al_j^2 {\de_j^{\al_j}|x|^{\al_j-2}\over(\de_j^{\al_j}+|x|^{\al_j})^2} \phi^2 (x)dx&\le \sum\limits_{i=1}^k\int\limits_{\Omega }2\al_i^2 {\de_i^{\al_i}|x|^{\al_i-2}\over(\de_i^{\al_i}+|x|^{\al_i})^2} \phi^2 (x)dx\\ &=\int\limits_{\Omega }|\nabla\phi (x)|^2dx-\int\limits_{\Omega }\psi(x)\phi(x)dx\\ &=1+O\(\|\psi\|_p\|\phi\|\) =O(1)\end{align*}
Our claim follows since by scaling
$$\int\limits_{\Omega^j}|\nabla\phi^j(y)|^2dy=\de^2_j\int\limits_{\Omega^j}|\nabla\phi (\de_j y)|^2dy=\int\limits_{\Omega }|\nabla\phi (x)|^2dx=1.$$
and
 $$\int\limits_{\Omega^j}2\al_j^2 {|y|^{\al_j-2}\over(1+|y|^{\al_j})^2}\(\phi^j(y)\)^2dy=\int\limits_{\Omega }2\al_j^2 {\de_j^{\al_j}|x|^{\al_j-2}\over(\de_j^{\al_j}+|x|^{\al_j})^2} \phi^2 (x)dx .$$

Therefore,  by Proposition \eqref{compact} we can assume that (up to a subsequence)
  $\phi^j \rightharpoonup \phi^j_0$ weakly in $\mathrm{H}_{\al_j}(\rr^2)$ and strongly in $\mathrm{L}_{\al_j}(\rr^2).$

Now, we point out that 
each function $  \phi^j $ solves the problem
\begin{equation}\label{s2.3}
 -\Delta   \phi^j  =2\alpha_j^2{|y|^{\alpha_j-2}\over (1+|y|^{\alpha_j})^2}  \phi^j+\rho_j(y)\phi^j+\de_j^2\psi(\de_j y)\ \hbox{in}\ \Omega^j ,\ \phi^j =0\ \hbox{on}\ \partial\Omega^j ,
\end{equation}
where
\begin{equation}\label{s2.4}
\rho^j(y):=\sum\limits_{i=1\atop i\not=j}^k2\alpha_i^2{ {\de_i}  ^{\alpha_i}{\de_j}  ^{\alpha_j}|y|^{\alpha_i-2}\over ({\de_i}  ^{\alpha_i}+{\de_j}  ^{\alpha_i}|y|^{\alpha_i})^2}.\end{equation}

Now, let $\varphi\in C^\infty_0(\rr^2)$ be a given function and let $\mathcal{K}$ its support. It is clear that if $n$ is large enough
$$\mathcal{K}\subset {A_j\over\de_j}=\left\{y\in \Omega^j\ :\ \sqrt{\de_{j-1}\over\de_j}\le |y|\le \sqrt{\de_{j+1}\over\de_j}\right\},$$
where $A_j$ is the annulus defined in \eqref{anelli}.
We multiply equation \eqref{s2.3} by $\varphi$ and we get
\begin{align*}
&\int\limits_{\mathcal{K}}\nabla\phi^j(y)\nabla\varphi(y) dy-\int\limits_{\mathcal{K}}2\alpha_j^2{|y|^{\alpha_j-2}\over (1+|y|^{\alpha_j})^2}  \phi^j(y)\varphi(y)  dy\\
&=
\sum\limits_{i=1\atop i\not=j}^k\int\limits_{\mathcal{K}} 2\alpha_i^2{ {\de_i}  ^{\alpha_i}{\de_j}  ^{\alpha_j}|y|^{\alpha_i-2}\over ({\de_i}  ^{\alpha_i}+{\de_j}  ^{\alpha_j}|y|^{\alpha_i})^2} \phi^j(y)\varphi(y)  dy+\int\limits_{\mathcal{K}}\de_j^2\psi(\de_j y)\varphi(y)  dy.\end{align*}
Therefore, passing to the limit we get
\begin{align}\label{phi0}
&\int\limits_{\mathcal{K}}\nabla\phi^j_0(y)\nabla\varphi(y) dy-\int\limits_{\mathcal{K}}2\alpha_j^2{|y|^{\alpha_j-2}\over (1+|y|^{\alpha_j})^2}  \phi^j_0(y)\varphi(y)  dy=0\ \forall\ \varphi\in C^\infty_0(\rr^2),\end{align}
because
\begin{align*}
&
\sum\limits_{i=1\atop i\not=j}^k\int\limits_{\mathcal{K}} 2\alpha_i^2{ {\de_i}  ^{\alpha_i}{\de_j}  ^{\alpha_j}|y|^{\alpha_i-2}\over ({\de_i}  ^{\alpha_i}+{\de_j}  ^{\alpha_j}|y|^{\alpha_i})^2} \phi^j(y)\varphi(y)  dy\\ &=O\(\sum\limits_{i=1\atop i\not=j}^k\int\limits_{{A_j\over\de_j}} 2\alpha_i^2{ {\de_i}  ^{\alpha_i}{\de_j}  ^{\alpha_j}|y|^{\alpha_i-2}\over ({\de_i}  ^{\alpha_i}+{\de_j}  ^{\alpha_j}|y|^{\alpha_i})^2} |\phi^j(y)|  dy\)\ \hbox{(because $\mathcal{K}\subset {A_j\over\de_j}$)}\\
&=O\(\sum\limits_{i=1\atop i\not=j}^k \int\limits_{{A_j }} 2\alpha_i^2{ {\de_i}  ^{\alpha_i} |x|^{\alpha_i-2}\over ({\de_i}  ^{\alpha_i}+ |x|^{\alpha_i})^2} |\phi (x) |dx\)\ \hbox{(we scale $ x=\de_j y$)}\\
&=O\(\sum\limits_{i=1\atop i\not=j}^k\(\int\limits_{{A_j }}\left| 2\alpha_i^2{ {\de_i}  ^{\alpha_i} |x|^{\alpha_i-2}\over ({\de_i}  ^{\alpha_i}+ |x|^{\alpha_i})^2} dx \right|^p\)^{1/p} \|\phi \|_q\)\ \hbox{(we use H\"older's estimate)}\\ &=o(1) \ \hbox{(we use   estimate \eqref{er6} and the fact that  $|\phi |_q\le 1$)}   \end{align*}
and
$$\int\limits_{\mathcal{K}}\de_j^2\psi(\de_j y)\varphi(y)  dy=O\(\int\limits_{\Omega^j}\de_j^2|\psi(\de_j y)|  dy\)=O\(\int\limits_{\Omega } |\psi(x)|  dx\)=O(\|\psi\|_p)=o(1).$$
By \eqref{phi0} we deduce that $\phi^j_0$ is a solution to the equation
$$-\Delta \phi^j_0 =2\alpha_j^2{|y|^{\alpha_j-2}\over (1+|y|^{\alpha_j})^2}\phi^j_0\ \hbox{in}\ \rr^2\setminus\{0\}.$$
Finally, since $\int\limits_{\rr^2}|\nabla \phi^j_0(y)|^2dy\le1$ it is standard to see that $\phi^j_0$ is a solution in the whole space $\rr^2.$
By Theorem \ref{esposito}  we get the claim.

\medskip
{\em Step 2: we will show that $\gamma_j=0$ for any $j=1,\dots,k.$}

Here we are  inspired by some ideas used by Gladiali-Grossi \cite{gg}.

We set
\begin{equation}\label{sigmai}\sigma_i(\lambda):= \ln\la \int\limits_{\Omega^i} 2\al_i^2{ |y|^{\al_i-2}\over \(1+|y|^{\al_ i}\)^2}\phi^i(y)dy.\end{equation}
We will show that
\begin{equation}\label{sigma}
\sigma_i :=\lim\limits_{\la\to0} \sigma_i(\lambda)=0\ \hbox{for any}\ i=1,\dots,k.
\end{equation}
We know that $\phi$ solves the problem (see \eqref{s2.3})
\begin{equation}\label{cru0}
-\Delta  \phi=\sum\limits_{j=1}^k2\al_j^2{\de_j^{\al_j}|x|^{\al_j-2}\over \(\de_j^{\al_j}+|x|^{\al_j}\)^2}\phi+\psi\ \hbox{in}\ \Omega,\ \phi=0\ \hbox{on}\ \partial\Omega.\end{equation}
Set $Z_i(x):={\de_i^{\al_i}-|x|^{\al_i}\over \de_i^{\al_i}+|x|^{\al_i}.}$ We know that 
$Z_i$ solves (see Theorem \ref{esposito})
$$-\Delta Z_i=2\al_i^2{\de_i^{\al_i}|x|^{\al_i-2}\over \(\de_i^{\al_i}+|x|^{\al_i}\)^2}Z_i\quad\hbox{in}\ \rr^2.$$
Let $PZ_i$ be its projection onto $\mathrm{H}^1_0(\Omega) $ (see \eqref{pro}), i.e.
\begin{equation}\label{cru1}
-\Delta  PZ_i=2\al_i^2{\de_i^{\al_i}|x|^{\al_i-2}\over \(\de_i^{\al_i}+|x|^{\al_i}\)^2}Z_i\ \hbox{in}\ \Omega,\ PZ_i=0\ \hbox{on}\ \partial\Omega.\end{equation}
By maximum principle (see also Lemma \ref{pwi-lem}) we deduce that
\begin{equation}\label{pz}PZ_i(x)=Z_i(x)+1+O\(\de_i^{\al_i}\)={2\de_i^{\al_i} \over \de_i^{\al_i}+|x|^{\al_i} }+O\(\de_i^{\al_i}\)\end{equation}
frow which we get
\begin{equation}\label{cru2}
  PZ_i(\de_j y)=\left\{\begin{aligned}
  & O\({1\over|y|^{\al_i}}\({\de_i\over\de_j}\)^{\al_i}\) +O\(\de_i^{\al_i}\)\ \hbox{if}\ i<j,\\
&{2  \over 1+|y|^{\al_i} }+ O\(\de_i^{\al_i}\)\ \hbox{if}\ i=j,\\
& 2 +O\({ |y|^{\al_i}}\({\de_j\over\de_i}\)^{\al_i}\)+ O\(\de_i^{\al_i}\)\ \hbox{if}\ i>j.\\
 \end{aligned}\right.\end{equation}

Now,  we multiply \eqref{cru0} by $(\ln\la)  PZ_i$ and \eqref{cru1} by $(\ln\la)\phi.$ If we subtract the two equations obtained, we get
\begin{align*}
\ln\la \int\limits_\Omega   2\al_i^2{\de_i^{\al_i}|x|^{\al_i-2}\over \(\de_i^{\al_i}+|x|^{\al_i}\)^2}\phi(x) Z_i(x)dx=&
 \ln\la\sum\limits_{j=1 }^k\int\limits_\Omega   2\al_j^2{\de_j^{\al_j}|x|^{\al_j-2}\over \(\de_j^{\al_j}+|x|^{\al_j}\)^2}\phi(x)PZ_i(x)dx\\ &+\ln\la\int\limits_\Omega \psi(x) PZ_i(x)dx   \end{align*}
and so
\begin{align}\label{cru3}
 &\ln\la\int\limits_\Omega   2\al_i^2{\de_j^{\al_i}|x|^{\al_i-2}\over \(\de_i^{\al_i}+|x|^{\al_i}\)^2}\phi(x) \(PZ_i(x)-Z_i(x)\) dx\nonumber\\ &+
 \ln\la\sum\limits_{j=1\atop j\not=i }^k\int\limits_\Omega   2\al_j^2{\de_j^{\al_j}|x|^{\al_j-2}\over \(\de_j^{\al_j}+|x|^{\al_j}\)^2}\phi(x)PZ_i(x)dx\nonumber\\ &+\ln\la\int\limits_\Omega \psi(x) PZ_i(x)dx=0.   \end{align}

We are going to pass to the limit in \eqref{cru3}.

The last term is  
\begin{equation}\label{cru4}\ln\la\int\limits_\Omega \psi(x) PZ_i(x)dx=O\(|\ln\la|\|\psi\|_p\)=o(1),\end{equation}
because of \eqref{inv2} and since by \eqref{pz} we get
$\|PZ_i\|_\infty=O(1).$

The first term is 
\begin{align}\label{cru5}
 &\ln\la\int\limits_\Omega   2\al_i^2{\de_i^{\al_i}|x|^{\al_i-2}\over \(\de_i^{\al_i}+|x|^{\al_i}\)^2}\phi(x) \(PZ_i(x)- Z_i(x)\)dx \nonumber\\ &\qquad \hbox{(we scale $x=\de_iy$ and we apply \eqref{pz})}\nonumber\\
&=\ln\la \int\limits_{\Omega^i} 2\al_i^2{ |y|^{\al_i-2}\over \(1+|y|^{\al_i}\)^2}\phi^i(y) dy +O\(\de_i^{\al_i}|\ln\la| \int\limits_{\Omega^i} 2\al_i^2{ |y|^{\al_i-2}\over \(1+|y|^{\al_i}\)^2}|\phi^i(y)| dy\)\nonumber\\ &\qquad  \hbox{(we use \eqref{sigmai} and \eqref{step1})}\nonumber\\
&= \sigma_i( \la) +o(1).\end{align}

We estimate the second term.
If $j\not=i$ we get
\begin{align}\label{cru6}
&\ln\la \int\limits_\Omega 2\al_j^2{\de_j^{\al_j}|x|^{\al_j-2}\over \(\de_j^{\al_j}+|x|^{\al_j}\)^2}\phi(x)PZ_i(x)dx\ \hbox{(we scale $x=\de_jy$)}\nonumber\\
&=\ln\la \int\limits_{\Omega^j} 2\al_j^2{ |y|^{\al_j-2}\over \(1+|y|^{\al_j}\)^2}\phi^j(y)PZ_i(\de_j y)dy\ \hbox{(we use \eqref{cru2})}\nonumber\\
&=\left\{\begin{aligned}
& 2\ln\la\int\limits_{\Omega^j} 2\al_j^2{ |y|^{\al_j-2}\over \(1+|y|^{\al_j}\)^2}\phi^j(y)dy   +\\
&\qquad +O\(|\ln\la|\int\limits_{\Omega^j} 2\al_j^2{ |y|^{\al_j-2}\over \(1+|y|^{\al_j}\)^2}|\phi^j(y)| \({ |y|^{\al_i}}\({\de_j\over\de_i}\)^{\al_i}+\de_i^{\al_i}\)dy\) \ & \hbox{if $j<i$}\\
&\\
&O\( |\ln\la|\int\limits_{\Omega^j} 2\al_j^2{ |y|^{\al_j-2}\over \(1+|y|^{\al_j}\)^2}|\phi^j(y)| \({1\over|y|^{\al_i}}\({\de_i\over\de_j}\)^{\al_i} +\de_i^{\al_i}\)dy\)\ & \hbox{if $j>i.$}\\
\end{aligned}\right.\nonumber\\
&\qquad\hbox{(we use \eqref{sigmai}, \eqref{crux}, \eqref{cruy} and  \eqref{cruz})}\nonumber\\
&=\left\{\begin{aligned}
&2\sigma_j(\la) +o(1)\ & \hbox{if $j<i$}\\
& o(1)\ & \hbox{if $j>i.$}\\
\end{aligned}\right.
\end{align}

 By \eqref{cru3}, \eqref{cru4}, \eqref{cru5} and  \eqref{cru6} we get
 $$\sigma_1(\la)=o(1)\ \hbox{and}\ \sigma_i(\la)+2\sum\limits_{j=1}^{i-1}\sigma_j(\la)=o(1)\ \hbox{for any}\ i=2,\dots,k, $$
 which implies passing to the limit and using the definition of $\sigma_i$ given in  \eqref{sigma}, 
$$\sigma_1 =0\ \hbox{and}\ \sigma_i +2\sum\limits_{j=1}^{i-1}\sigma_j =0\ \hbox{for any}\ i=2,\dots,k. $$
Therefore, \eqref{sigma} immediately follows.

We used the following three estimates.
If $j<i$ we have
\begin{align}\label{crux}
&\( |\ln\la|{\de_j\over\de_i}\)^{\al_i}\int\limits_{\Omega^j}{|y|^{\al_j+\al_i-2}\over\(1+|y|^{\al_j}\)^2}|\phi^j(y)|dy\ \hbox{(by H\"older's inequality)}\nonumber\\
&=O\( |\ln\la|\({\de_j\over\de_i}\)^{\al_i}\de_j^{2(1-p)\over p}\|\phi\|\(\int\limits_{\rr^2 }\({|y|^{\al_j+\al_i-2}\over\(1+|y|^{\al_j}\)^2} \)^pdy  \)^{1/p}\)\nonumber\\ &\qquad \hbox{(we use $\al_j >\al_i$ and we choose $p$ close to 1)}\nonumber\\
&=O\( |\ln\la|\({\de_j\over\de_i}\)^{\al_i}\de_j^{2(1-p)\over p} \)=o(1)
\end{align}
and if $j>i$ we have
\begin{align}\label{cruy}
&\( |\ln\la|{\de_i\over\de_j}\)^{\al_i}\int\limits_{\Omega^j}{1\over|y|^{\al_i-\al_j+2}\(1+|y|^{\al_j}\)^2}|\phi^j(y)|dy\ \hbox{(by H\"older's inequality)}\nonumber\\
&=O\( |\ln\la|\({\de_j\over\de_i}\)^{\al_i}\de_j^{2(1-p)\over p}\|\phi\|\(\int\limits_{\rr^2 }\({1\over|y|^{\al_i-\al_j+2}\(1+|y|^{\al_j}\)^2} \)^pdy  \)^{1/p}\)\nonumber\\ &\qquad \hbox{(we use $\al_i >\al_j$ and we choose $p$ close to 1)}\nonumber\\
&=O\( |\ln\la|\({\de_j\over\de_i}\)^{\al_i}\de_j^{2(1-p)\over p} \)=o(1);
\end{align}
moreover for any $i$ and $j$ we have
\begin{align}\label{cruz}
&  |\ln\la| \de_i ^{\al_i}\int\limits_{\Omega^j}{|y|^{\al_ j-2}\over\(1+|y|^{\al_j}\)^2}|\phi^j(y)|dy\ \hbox{(by H\"older's inequality)}\nonumber\\
&=O\( |\ln\la|\de_i ^{\al_i}\de_j^{2(1-p)\over p}\|\phi\|\(\int\limits_{\rr^2 }\({|y|^{\al_ j-2}\over \(1+|y|^{\al_j}\)^2} \)^p dy \)^{1/p}\)\nonumber\\ &\qquad \hbox{(  we choose $p$ close to 1)}\nonumber\\
&=O\( |\ln\la|\de_i ^{\al_i}\de_j^{2(1-p)\over p} \)=o(1).
\end{align}

 \medskip

Finally, we have all the ingredients to show that
\begin{equation}\label{ai}
\gamma_i  =0\ \hbox{for any}\ i=1,\dots,k.
\end{equation}

  We know that $Pw_i$  solves the problem
\begin{equation}\label{cr1}
-\Delta  Pw_i=2\al_i^2{\de_i^{\al_i}|x|^{\al_i-2}\over \(\de_i^{\al_i}+|x|^{\al_i}\)^2} \ \hbox{in}\ \Omega,\ Pw_i=0\ \hbox{on}\ \partial\Omega.\end{equation}
Now,  we multiply \eqref{cru0} by $ Pw_i$ and \eqref{cr1} by $ \phi.$ If we subtract the two equations obtained, we get
\begin{align}\label{cr2}
 \int\limits_\Omega   2\al_i^2{\de_i^{\al_i}|x|^{\al_i-2}\over \(\de_i^{\al_i}+|x|^{\al_i}\)^2}\phi(x)  dx=&
  \sum\limits_{j=1 }^k\int\limits_\Omega   2\al_j^2{\de_j^{\al_j}|x|^{\al_j-2}\over \(\de_j^{\al_j}+|x|^{\al_j}\)^2}\phi(x)Pw_i(x)dx\nonumber\\  &+ \int\limits_\Omega \psi(x) Pw_i(x)dx.  \end{align}

We want to pass to the limit in \eqref{cr2}.

The L.H.S. of \eqref{cr2} reduces to
 \begin{align}\label{cr4}
& \int\limits_\Omega   2\al_i^2{\de_i^{\al_i}|x|^{\al_i-2}\over \(\de_i^{\al_i}+|x|^{\al_i}\)^2}\phi(x)  dx\ \hbox{(we scale $x=\de_iy$)}\nonumber\\
&= \int\limits_{\Omega^i} 2\al_i^2{ |y|^{\al_i-2}\over \(1+|y|^{\al_i}\)^2}\phi^i(y) dy =o(1)\  \hbox{(because of \eqref{ex1} and \eqref{step1}).}
  \end{align}

The last term of the R.H.S. of \eqref{cr2} gives
 \begin{align}\label{cr41}\int\limits_\Omega \psi(x) Pw_i(x)dx=O\(|\ln\la|\|\psi\|_p\)o(1),\end{align}
  because of \eqref{inv2} and since by   \eqref{pro-exp}   we get
$\|Pw_i\|_\infty= O(|\ln\la|).$

Finally, we claim that  the first term of the R.H.S. of \eqref{cr2} is
  \begin{align}\label{cr42}
&  \sum\limits_{j=1 }^k\int\limits_\Omega   2\al_j^2{\de_j^{\al_j}|x|^{\al_j-2}\over \(\de_j^{\al_j}+|x|^{\al_j}\)^2}\phi(x)Pw_i(x)dx\nonumber \\ &=\left\{\begin{aligned}
  &4\pi\al_i\(\gamma_i+2\sum\limits_{j=i+1}^k \gamma_j\)+o(1)&\ \hbox{if}\ i=1,\dots,k-1,\\
  & 4\pi\al_k\gamma_k +o(1)&\ \hbox{if}\ i=1,\dots,k.\\
  \end{aligned}\right.\end{align}
  Therefore, passing to the limit, by \eqref{cr2}, \eqref{cr4}, \eqref{cr41} and \eqref{cr42}
  we immediately get
  $$ \gamma_k =0\ \hbox{and}\ \gamma_i+2\sum\limits_{j=i+1}^k \gamma_j=0\ \hbox{for any}\ i=1,\dots,k-1,$$
  which implies \eqref{ai}.

It only remains to    prove \eqref{cr42}. We have
\begin{align*}
&\int\limits_\Omega   2\al_j^2{\de_j^{\al_j}|x|^{\al_j-2}\over \(\de_j^{\al_j}+|x|^{\al_j}\)^2}\phi(x)Pw_i(x)dx\ \hbox{(we scale $x=\de_jy$)}\nonumber\\
&=\int\limits_{\Omega^j}   2\al_j^2{ |y|^{\al_j-2}\over \(1+|y|^{\al_j}\)^2}\phi^j(y)Pw_i(\de_jy)dy\ \hbox{(we use \eqref{pwi})}\nonumber\\ \end{align*}
\begin{align*}
&=\left\{\begin{aligned}
&  \int\limits_{\Omega^j} 2\al_j^2{ |y|^{\al_j-2}\over \(1+|y|^{\al_j}\)^2}\phi^j(y)\(-2\al_i \ln\de_i+h_i(0)\)dy   +\\
&\qquad +O\( \int\limits_{\Omega^j} 2\al_j^2{ |y|^{\al_j-2}\over \(1+|y|^{\al_j}\)^2}|\phi^j(y)|
 \({ |y|^{\al_i}}\({\de_j\over\de_i}\)^{\al_i}+\de_j|y|+\de_i^{\al_i}\)dy\)
\ & \hbox{if $j<i$}\\
&\\
 &  \int\limits_{\Omega^i} 2\al_i^2{ |y|^{\al_i-2}\over \(1+|y|^{\al_i}\)^2}\phi^i(y)\(-2\al_i\ln \de_i-2 \ln (1+|y|^{\al_i})   +h_i(0)\)dy   +\\
&\qquad +O\( \int\limits_{\Omega^i} 2\al_i^2{ |y|^{\al_i-2}\over \(1+|y|^{\al_i}\)^2}|\phi^i(y)|
 \( \de_i|y|+\de_i^{\al_i}\)dy\)
\ & \hbox{if $j=i$}\\
&\\
 &  \int\limits_{\Omega^j} 2\al_j^2{ |y|^{\al_j-2}\over \(1+|y|^{\al_j}\)^2}\phi^j(y)\(-2\al_i\ln\(\de_j|y|\) +h_i(0)\)dy   +\\
&\qquad +O\( \int\limits_{\Omega^j} 2\al_j^2{ |y|^{\al_j-2}\over \(1+|y|^{\al_j}\) }|\phi^j(y)|
 \({1\over|y|^{\al_i}}\({\de_i\over\de_j}\)^{\al_i}+\de_j|y|+\de_i^{\al_i}\)dy\)
\ & \hbox{if $j>i$}\\
&\\
\end{aligned}\right.\nonumber\\
& \hbox{(we use  the relation between $\de_i$ and $\la$ in \eqref{para} and we use \eqref{crux}, \eqref{cruy}, \eqref{cruz}  and \eqref{cruw})}\\\end{align*}
\begin{align*}
 &=\left\{\begin{aligned}
&  \int\limits_{\Omega^j} 2\al_j^2{ |y|^{\al_j-2}\over \(1+|y|^{\al_j}\)^2}\phi^j(y) \[-2\al_i \ln d_i-2\(2(k-i)+1\)\ln\la+h_i(0)\] dy   \\
&\qquad +o(1) \  \hbox{if $j<i$}\\
&\\
 &  \int\limits_{\Omega^i} 2\al_i^2{ |y|^{\al_i-2}\over \(1+|y|^{\al_i}\)^2}\phi^j(y)\[-2\al_i \ln d_i-2\(2(k-i)+1\)\ln\la-2 \ln (1+|y|^{\al_i}) +h_i(0) \]dy  \\
&\qquad  +o(1) \  \hbox{if $j=i$}\\
&\\
 &  \int\limits_{\Omega^j} 2\al_j^2{ |y|^{\al_j-2}\over \(1+|y|^{\al_j}\)^2}\phi^j(y)\[-2\al_i \ln d_j-2\(2(k-j)+1\)\ln\la -2\al_i\ln|y| +h_i(0) \] dy  \\
&\qquad   + o(1) \  \hbox{if $j>i$} \\
\end{aligned}\right.\nonumber\\
& \hbox{(we use  the definition of $\sigma_i$ in   \eqref{sigmai} and  we use \eqref{step1} and  \eqref{ex1})}\\
 \end{align*}
 \begin{align*}
 &=\left\{\begin{aligned} &-2\(2(k-i)+1\)\sigma_j(\la) +o(1)
 \\
&\qquad\qquad \hbox{if $j<i$}\\
&\\
 & -2\(2(k-i)+1\)\sigma_i(\la)+ \int\limits_{\Omega^i} 2\al_i^2{ |y|^{\al_i-2}\over \(1+|y|^{\al_i}\)^2}\phi^i(y)\[ -2 \ln (1+|y|^{\al_i})   \]dy    +o(1)  \\
&\qquad \qquad  \hbox{if $j=i$}\\
&\\
 &  -2\(2(k-j)+1\)\sigma_j(\la)+\int\limits_{\Omega^j} 2\al_j^2{ |y|^{\al_j-2}\over \(1+|y|^{\al_j}\)^2}\phi^j(y)\[  -2\al_i\ln|y|   \] dy     + o(1)  \\
&\qquad \qquad \hbox{if $j>i$} \\
\end{aligned}\right.\nonumber\\
& \hbox{(we use     \eqref{sigma}  and  \eqref{step1}    because $\ln (1+|y|^{\al_j}),\ln|y|\in \mathrm{L}_{\al_j}(\rr^2)$ )}\\
 \end{align*}
   \begin{align*} 
   &=\left\{\begin{aligned} &o(1)
 &\  \hbox{if $j<i$}\\
&\\
 &  \gamma_i \int\limits_{\rr^2} 2\al_i^2{ |y|^{\al_i-2}\over \(1+|y|^{\al_i}\)^2}{ 1-|y|^{\al_i }\over  1+|y|^{\al_i}  } \[ -2 \ln (1+|y|^{\al_i})   \]dy    +o(1)   &\  \hbox{if $j=i$}\\
&\\
 &    \gamma_j \int\limits_{\rr^2} 2\al_j^2{ |y|^{\al_j-2}\over \(1+|y|^{\al_j}\)^2}{ 1-|y|^{\al_j }\over  1+|y|^{\al_j}  }\[  -2\al_i\ln|y|   \] dy     + o(1)  &\ \hbox{if $j>i$} \\
\end{aligned}\right.\nonumber\\
& \hbox{(we use    \eqref{ex2} and \eqref{ex3})}\\
 \end{align*}
   \begin{align}\label{cr5} &=\left\{\begin{aligned}
&  o(1) \ & \hbox{if $j<i$}\\
&\\
 &  4\pi\al_i \gamma_i  +o(1) \ & \hbox{if $j=i$}\\
&\\
 &  8\pi\al_i  \gamma_j    + o(1) \ & \hbox{if $j>i$} \\
\end{aligned}\right.
 \end{align}

If we sum \eqref{cr5} over the index $j$ we get \eqref{cr42}.

We used the following estimate.
For any   $j$ we have
\begin{align}\label{cruw}
&    \de_j  \int\limits_{\Omega^j}{|y|^{\al_ j-1 }\over\(1+|y|^{\al_j}\)^2}|\phi^j(y)|dy\ \hbox{(by H\"older's inequality)}\nonumber\\
&=O\( \de_j  \de_j^{2(1-p)\over p}\|\phi\|\(\int\limits_{\rr^2 }\({|y|^{\al_ j-1}\over \(1+|y|^{\al_j}\)^2} \)^p dy \)^{1/p}\)\nonumber\\ &\qquad \hbox{(  we choose $p$ close to 1)}\nonumber\\
&=O\(   \de_j^{2-p\over p} \)=o(1).
\end{align}

  A straightforward computation leads to
  \begin{align}\label{ex1}
 &\int\limits_\Omega   2\al_i^2{ |y|^{\al_i-2}\over \(1+|y|^{\al_i}\)^2}{  1-|y|^{\al_i} \over 1+|y|^{\al_i} }dy=0,\\
\label{ex2}
 &\int\limits_\Omega   2\al_i^2{ |y|^{\al_i-2}\over \(1+|y|^{\al_i}\)^2}{  1-|y|^{\al_i} \over 1+|y|^{\al_i} }\ln\(1+|y|^{\al_i}\)^2dy=-4\pi\al_i,\\
\label{ex3}
 &\int\limits_\Omega   2\al_i^2{ |y|^{\al_i-2}\over \(1+|y|^{\al_i}\)^2}{  1-|y|^{\al_i} \over 1+|y|^{\al_i} }\ln|y|dy=-4\pi.
 \end{align}

\medskip
{\em Step 3: we will show that a contradiction arises!}
 We multiply equation \eqref{inv1} by $\phi$  and we get
\begin{align*}
1 &=\sum\limits_{i=1}^k\int\limits_{\Omega}2\alpha_i^2{ {\de_i}  ^{\alpha_i}|x|^{\alpha_i-2}\over ({\de_i}  ^{\alpha_i}+|x|^{\alpha_i})^2}\phi^2(x)dx+\int\limits_{\Omega}\psi(x)\phi(x)dx    \\
&=\sum\limits_{i=1}^k \int\limits_{\Omega^i} 2\alpha_i^2{  |y|^{\alpha_i-2}\over (1+|y|^{\alpha_i})^2}\(\phi^i(y)\)^2dy +O\(\|\psi\|_p\|\phi\|\) \  \hbox{(we use \eqref{inv2})}\\
&=\sum\limits_{i=1}^k \int\limits_{\Omega^i} 2\alpha_i^2{  |y|^{\alpha_i-2}\over (1+|y|^{\alpha_i})^2}\(\phi^i(y)\)^2dy +o(1)\\
 &=o(1)\ \hbox{(because  $\phi^i\to0$   strongly in $ \mathrm{L}_{\al_i}(\rr^2) $)}
\end{align*}
and a contradiction arises!
\end{proof}

\section{A contraction mapping argument and the proof of the main theorem}\label{quattro}

First of all we point out  that $W_{\la}+\phi_{\la}$ is a solution to \eqref{p} if and only if
  $\phi_{\la}$ is a solution of the problem
\begin{equation}\label{L2}
 \mathcal{L}_{\la}(\phi )=\mathcal{N}_{\la}(\phi )+\mathcal{S}_{\la}\phi+\mathcal{R}_{\la} \  \hbox{in}\  \Omega \\\\
\end{equation}
where the error term $\mathcal{R}_\la$ is defined in \eqref{rla},
the linear error term  $\mathcal{S}_\la$ is defined in \eqref{sla}
the linear operator $\mathcal{L}_{\la}$ is defined in \eqref{lla} and
and the higher order term $\mathcal{N}_{\la}$ is defined as
\begin{equation}\label{nla} \mathcal{N}_{\la}(\phi ):=\la\[f\(W_{\la}+\phi \)-f\({W_{\la} }\)-f'\(  {W_{\la}}\)\phi\].\end{equation}

\begin{prop}\label{resto}
If  $p $ is close enough to $1$ there exist $\lambda_0>0$ and $R>0$
such that
  for any $\lambda \in (0,\lambda_0)$ there exists a unique solution $\phi_\lambda \in \mathrm{H}^1_0(\Omega)$ to
\begin{equation}\label{eqrid}-\Delta (W_\lambda+\phi_\lambda)=\lambda f(W_\lambda+\phi_\lambda)\ \hbox{in}\ \Omega,\quad \phi=0\ \hbox{on}\ \partial\Omega,\end{equation}
 such that $\phi(x)=\phi(-x)$ for any $x\in\Omega$ and
$$\|\phi_\lambda\|\le R\la^{2-p\over2p(2k-1)}|\ln\lambda|.$$
\end{prop}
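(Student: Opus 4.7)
The plan is to recast \eqref{eqrid} as the fixed point equation \eqref{L2} and solve it via Banach's contraction principle in a small ball of $\mathrm{H}^1_0(\Omega)$. By Proposition \ref{inv} the linear operator $\mathcal{L}_\la$ admits an inverse of $\mathrm{L}^p(\Omega)\to\mathrm{H}^1_0(\Omega)$-norm at most $c|\ln\la|$; because $W_\la$ and hence each coefficient of $\mathcal{L}_\la$ are radial, $\mathcal{L}_\la^{-1}$ preserves the subspace $\mathrm{H}^1_{0,s}(\Omega):=\{\phi\in\mathrm{H}^1_0(\Omega):\phi(-x)=\phi(x)\}$. Thus \eqref{L2} rewrites as the fixed point problem
$$\phi=T_\la(\phi):=\mathcal{L}_\la^{-1}\bigl[\mathcal{N}_\la(\phi)+\mathcal{S}_\la\phi+\mathcal{R}_\la\bigr]$$
on $\mathrm{H}^1_{0,s}(\Omega)$. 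I fix $p\in(1,1+\eps)$ so that Lemmas \ref{errore} and \ref{tec1} apply with a positive $\la$-exponent and introduce the closed ball
$$B_R:=\Bigl\{\phi\in\mathrm{H}^1_{0,s}(\Omega)\,:\,\|\phi\|\le R\,\la^{(2-p)/(2p(2k-1))}|\ln\la|\Bigr\},$$
aiming to prove $T_\la(B_R)\subset B_R$ and that $T_\la$ is a strict contraction on $B_R$ for suitable $R$ and $\la$.

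By Proposition \ref{inv} everything reduces to $\mathrm{L}^p$ bounds on $\mathcal{N}_\la(\phi)$, $\mathcal{S}_\la\phi$ and $\mathcal{R}_\la$. The last quantity is directly furnished by Lemma \ref{errore} and sets the radius of $B_R$. For $\mathcal{S}_\la\phi$ I would use H\"older with exponents $1/q+1/q'=1/p$, choosing $q\in(1,1+\eps)$ close enough to $1$, together with Lemma \ref{tec1} and the two-dimensional Sobolev embedding $\mathrm{H}^1_0\hookrightarrow\mathrm{L}^{q'}$, which gives
$$\|\mathcal{S}_\la\phi\|_p\le \|\mathcal{S}_\la\|_q\|\phi\|_{q'}=O\bigl(\la^{(2-q)/(2q(2k-1))}\bigr)\|\phi\|=o(1)\|\phi\|.$$
For $\mathcal{N}_\la(\phi)$, using $f''=f$ and a second-order Taylor expansion one has $\mathcal{N}_\la(\phi)=\la\int_0^1(1-t)f(W_\la+t\phi)\,dt\,\phi^2$. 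The annular estimates underlying Lemmas \ref{errore}--\ref{tec1} give $\la\|e^{\pm W_\la}\|_q=O(\la^{(2-q)/(2q(2k-1))})$, while Moser--Trudinger yields $e^{\pm t\phi}$ uniformly bounded in every $\mathrm{L}^s$ on bounded subsets of $\mathrm{H}^1_0$; combined with $\phi^2\in\mathrm{L}^s$ for any finite $s$, H\"older then produces
$$\|\mathcal{N}_\la(\phi)\|_p=O\bigl(\la^{(2-p)/(2p(2k-1))}\bigr)\|\phi\|^2,$$
which is quadratic in $\|\phi\|$ and hence negligible on $B_R$.

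Plugging these three estimates into Proposition \ref{inv} yields $\|T_\la(\phi)\|\le c|\ln\la|\bigl(\|\mathcal{R}_\la\|_p+o(1)\|\phi\|+o(1)\|\phi\|^2\bigr)\le R\la^{(2-p)/(2p(2k-1))}|\ln\la|$ for $\phi\in B_R$, once $R$ is large and $\la$ small. An analogous Taylor computation,
$$\mathcal{N}_\la(\phi_1)-\mathcal{N}_\la(\phi_2)=\la\,[f'(W_\la+\xi)-f'(W_\la)](\phi_1-\phi_2),$$
combined with the same Moser--Trudinger/H\"older machinery, shows $\|T_\la(\phi_1)-T_\la(\phi_2)\|\le\tfrac12\|\phi_1-\phi_2\|$ on $B_R$, and Banach's theorem produces the unique symmetric $\phi_\la\in B_R$ solving \eqref{eqrid} with the claimed bound. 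The main technical obstacle is the estimate for $\mathcal{N}_\la$: one must simultaneously tame the $e^{-W_\la}$ part (which blows up near odd-indexed bubble cores, where $W_\la\to-\infty$) and absorb the exponential factors $e^{\pm t\phi}$. The saving point is that the $\mathrm{L}^q$ bounds for $\la e^{\pm W_\la}$ coincide with those established in Section \ref{due} for $\mathcal{R}_\la$ and $\mathcal{S}_\la$, so no new annular decomposition is needed.
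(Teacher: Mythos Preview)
Your overall strategy---rewrite \eqref{eqrid} as the fixed point equation $\phi=T_\la(\phi)$ and apply Banach's theorem in a ball of $\mathrm{H}^1_{0,s}(\Omega)$---is exactly the paper's approach, and the treatment of $\mathcal{R}_\la$ and $\mathcal{S}_\la\phi$ is correct. However, there is a genuine error in your handling of $\mathcal{N}_\la$.

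You assert that ``the annular estimates underlying Lemmas \ref{errore}--\ref{tec1} give $\la\|e^{\pm W_\la}\|_q=O(\la^{(2-q)/(2q(2k-1))})$'', and hence $\|\mathcal{N}_\la(\phi)\|_p=O(\la^{(2-p)/(2p(2k-1))})\|\phi\|^2$. This is false. The estimates in Section \ref{due} bound the \emph{error terms} $\mathcal{R}_\la$ and $\mathcal{S}_\la$, which are small precisely because of cancellation between $\la e^{\pm W_\la}$ and the bubble profiles $|x|^{\al_j-2}e^{w_j}$. The quantity $\la e^{W_\la}$ itself is \emph{not} small in $\mathrm{L}^q$ for $q>1$: on $A_j$ (say $j$ even) one has $\la e^{W_\la}\sim|x|^{\al_j-2}e^{w_j}$, so
\[
\int_{A_j}\bigl(\la e^{W_\la}\bigr)^q\,dx\sim\de_j^{2-2q}\int\frac{|y|^{(\al_j-2)q}}{(1+|y|^{\al_j})^{2q}}\,dy=O\bigl(\de_1^{2-2q}\bigr)=O\bigl(\la^{(2k-1)(1-q)}\bigr),
\]
a \emph{negative} power of $\la$. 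Thus $\|\la e^{\pm W_\la}\|_q\to\infty$ as $\la\to0$, and the correct bound (this is the paper's Lemma \ref{B2}) reads
\[
\|\mathcal{N}_\la(\phi)\|_p\le c_1e^{c_2\|\phi\|^2}\,\la^{(2k-1)(1-pr)/(pr)}\,\|\phi\|^2,
\]
with a coefficient that blows up.

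The contraction argument is nevertheless salvageable, but not for the reason you give. What saves the day is that $\|\phi\|^2$ contributes $\la^{(2-p)/(p(2k-1))}|\ln\la|^2$ on $B_R$, and one must check the exponent balance
\[
\frac{2-p}{p(2k-1)}+(2k-1)\frac{1-pr}{pr}>\frac{2-p}{2p(2k-1)},
\]
which indeed holds for $p,r$ sufficiently close to $1$ (both sides tend to $\tfrac{1}{2(2k-1)}$ and $0$ respectively). So your last sentence---``the $\mathrm{L}^q$ bounds for $\la e^{\pm W_\la}$ coincide with those established in Section \ref{due}''---is precisely the wrong intuition: the bounds do \emph{not} coincide, and you need the explicit (worse) exponent plus this inequality to close the loop.
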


\begin{proof}

 Let $\mathcal{H} := \{\phi\in\mathrm{H}^1_0(\Omega)\ :\ \phi(x)=\phi(-x)\ \forall\ x\in\Omega\}.$
As a consequence of Proposition \ref{inv}, we conclude that $\phi$
is a solution to \eqref{eqrid} if and only if it is a fixed point
for the operator $ \mathcal{T}_\lambda:
\mathcal{H}\to\mathcal{H},$ defined by
$$T_\lambda(\phi)= \left( \mathcal{L}_\lambda\right)^{-1}
\left(\mathcal{N}_\lambda(\phi)+\mathcal{S}_\lambda \phi +\mathcal{R}_\lambda\right),$$
where $\mathcal{L}_\lambda$, $\mathcal{N}_\la$, $\mathcal{S}_\la$ and $\mathcal{R}_\la$  are defined  in \eqref{rla},  \eqref{nla}, \eqref{sla} and \eqref{rla}, respectively.

 Let   us introduce the ball $B_{\la,R}:=\left\{\phi\in \mathcal{H} \ :\ \|\phi\|\le R\lambda^{ 2-p\over2p(2k-1)}\right\}$. We will show that $ T_\lambda:B_{\la,R}\to B_{\la,R}$ is a contraction mapping
provided $\la$ is small enough and $r$ is large enough.

\medskip {\em Let us prove that $T_\la$ maps  the ball $B_{\la,r}$ into itself,  i.e.}
\begin{equation}\label{c2.1}
\|\phi\|\le R\lambda^{2-p\over2p(2k-1)}|\ln\lambda|\ \Longrightarrow\
\left\|\mathcal{T}_\lambda(\phi)\right\|\le R\lambda^{2-p\over2p(2k-1)}|\ln\lambda|.
\end{equation}

By     Lemma \ref{B2} (where we take $h=\mathcal{N}_\lambda(\phi)+\mathcal{S}_\lambda \phi +\mathcal{R}_\lambda$), we deduce that:
\begin{align*}
\left\|\mathcal{T}_\lambda(\phi)\right\|&  \le c|\ln\la|  \left(\left\|\mathcal{N}_\lambda(\phi)\right\|_p+\left\|\mathcal{S}_\lambda \phi \right\|_p+\left\| \mathcal{R}_\lambda\right\|_p\right)\\   &\quad\hbox{(we use   \eqref{B21} with
 $p$ and $r$  close enough to $1 $ for the first term,}  \\   &\quad\hbox{we use   H\"older's inequality for the second term and}   \\   &\quad\hbox{we use    Lemma \ref{errore} for the third term)}\\
&  \le c|\ln\lambda|\( \|\phi\|^2e^{c_2\|\phi\|^2}\lambda^{ (2k-1){1-pr \over
pr}}+\left\|\mathcal{S}_\lambda \right\|_{pq} \left\|\phi \right\|_{ps} + \lambda^{ 2-p\over2p(2k-1)}\)\\
&  \le c|\ln\lambda|\( \|\phi\|^2e^{c_2\|\phi\|^2}\lambda^{ (2k-1){1-pr \over
pr}}+\lambda^{ 2-pq\over2pq(2k-1)} \left\|\phi \right\|  + \lambda^{ 2-p\over2p(2k-1)}\)\\   &\quad\hbox{(we use  Lemma \ref{tec1})}  \\   &
\end{align*}
and if we choose $q$ close enough to 1,    $R$  suitable large and $\la $   small enough we get \eqref{c2.1}.

\medskip {\em Let us prove that $T_\la$ is a contraction mapping,  i.e. there exists $L>1$ such that}
\begin{equation}\label{c2.2}
\|\phi\|\le R \lambda^{2-p\over2p(2k-1)}|\log\rho|\ \Longrightarrow\
\left\|\mathcal{T}_\lambda(\phi_1)-\mathcal{T}_\lambda(\phi_2)\right\|\le
L \|\phi_1-\phi_2\|.
\end{equation}

By     Lemma \ref{B2} (where we take $h=\mathcal{N}_\lambda(\phi_1 )-\mathcal{N}_\lambda(\phi_2 )+\mathcal{S}_\lambda (\phi_1-\phi_2)$), we deduce that:
\begin{align*}
&\left\|\mathcal{T}_\lambda(\phi)\right\|    \le c|\ln\la|  \left(\left\|\mathcal{N}_\lambda(\phi_1 )-\mathcal{N}_\lambda(\phi_2 )\right\|_p+\left\| \mathcal{S}_\lambda (\phi_1-\phi_2)\right\|_p\right)\\   &\quad\hbox{(we use   \eqref{B22} with
 $p$ and $r$  close enough to $1 $ for the first term and}  \\   &\quad\hbox{we use   H\"older's inequality for the second term)}   \\
&  \le c|\ln\la|  \[c_1e^{c_2(\|\phi_1\|^2+\|\phi_2\|^2)}\lambda^{  (2k-1){1-pr \over
pr}}
   \|\phi_1-\phi_2\|(\|\phi_1\|+\|\phi_2\|)\right.
   \\ &\left. \hskip2truecm+\left\|\mathcal{S}_\lambda \right\|_{pq} \left\|\phi_1-\phi_2 \right\|_{ps} \] \\
&  \le c|\ln\la|  \[c_1e^{c_2(\|\phi_1\|^2+\|\phi_2\|^2)}\lambda^{  (2k-1){1-pr \over
pr}}
   (\|\phi_1\|+\|\phi_2\|)\  +\lambda^{ 2-pq\over2pq(2k-1)}\]\left\|\phi_1-\phi_2 \right\|   \end{align*}
and if we choose $q$ close enough to 1,    $R$  suitable large and $\la $   small enough we get \eqref{c2.2}.

\end{proof}
  \begin{lemma}\label{B2}
  For any   $p\ge1$ and $r>1$ there exist $\lambda_0>0$ and $c_1,c_2>0$ such that for any   $\lambda \in (0,\lambda_0)$ we have for any $\phi,\phi_1,\phi_2\in\mathrm{H}^1_0(\Omega):$
 \begin{equation}\label{B21}
 \left\|\mathcal{N}_\lambda(\phi)\right\|_p\le c_1e^{c_2\|\phi \|^2}\lambda^{ (2k-1){1-pr \over pr}}\|\phi \|^2\end{equation}
  and
    \begin{equation}\label{B22}
\left\|\mathcal{N}_\lambda(\phi_1)-\mathcal{N}_\lambda(\phi_2)\right\|_p\le
c_1e^{c_2(\|\phi_1\|^2+\|\phi_2\|^2)}\lambda^{  (2k-1){1-pr \over
pr}}
   \|\phi_1-\phi_2\|(\|\phi_1\|+\|\phi_2\|).\end{equation}
  \end{lemma}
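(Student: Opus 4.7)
The plan is to obtain both \eqref{B21} and \eqref{B22} by the same Taylor--H\"older--Moser-Trudinger scheme. Since $f(s)=e^s-e^{-s}$ satisfies $f''(s)=f(s)$, we have $|f''(s)|\le e^{|s|}$, and the integral form of Taylor's theorem
\[\mathcal{N}_\la(\phi)=\la\int_0^1 (1-t)\,f''(W_\la+t\phi)\,\phi^2\,dt\]
yields the pointwise estimate $|\mathcal{N}_\la(\phi)(x)|\le C\la\,e^{|W_\la(x)|}\,e^{|\phi(x)|}\,\phi^2(x)$. An analogous double expansion of the difference (write $f(W+\phi_1)-f(W+\phi_2)=(\phi_1-\phi_2)\int_0^1 f'(W+t\phi_1+(1-t)\phi_2)\,dt$ and then $f'-f'(W)=\int_0^1 f''\,ds$) gives
\[|\mathcal{N}_\la(\phi_1)-\mathcal{N}_\la(\phi_2)|\le C\la\,e^{|W_\la|}\,e^{|\phi_1|+|\phi_2|}\,(|\phi_1|+|\phi_2|)\,|\phi_1-\phi_2|.\]

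I then split by H\"older's inequality with exponents $r$ and $r'=r/(r-1)$ (so $\tfrac{1}{pr}+\tfrac{1}{pr'}=\tfrac1p$):
\[\|\mathcal{N}_\la(\phi)\|_p\le C\,\|\la e^{|W_\la|}\|_{pr}\cdot\|e^{|\phi|}\phi^2\|_{pr'},\]
and analogously for \eqref{B22}. The $\phi$-factor is dealt with by the Moser-Trudinger inequality: from $c|\phi|\le \eps\,\phi^2/\|\phi\|^2+c^2\|\phi\|^2/(4\eps)$ with $\eps<4\pi$ one gets $\int_\Omega e^{c|\phi|}\le C e^{c^2\|\phi\|^2/(4\eps)}$; combining this with Sobolev embedding to absorb the polynomial factor $\phi^2$ yields $\|e^{|\phi|}\phi^2\|_{pr'}\le C\,e^{c_2\|\phi\|^2}\|\phi\|^2$. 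For the Lipschitz version, Cauchy-Schwarz splits $\int e^{c(|\phi_1|+|\phi_2|)}\le (\int e^{2c|\phi_1|})^{1/2}(\int e^{2c|\phi_2|})^{1/2}$, producing the factor $e^{c_2(\|\phi_1\|^2+\|\phi_2\|^2)}$ required in \eqref{B22}.

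The main (and only delicate) step is bounding $\|\la e^{|W_\la|}\|_{pr}$, and this is where the precise choice of the $\de_j$'s and $\al_j$'s pays off. The key point, already visible in the proof of Lemma \ref{tec1}, is that the calibration \eqref{deltai} makes the $\ln\la$ appearing inside $e^{\pm W_\la}$ cancel exactly against the prefactor $\la$: on each annulus $A_j$, after writing $W_\la(\de_j y)=(-1)^j[\Theta_j(y)+w_j(\de_j y)+(\al_j-2)\ln|\de_j y|-\ln\la]$ and using \eqref{er4.1}, one finds
\[\la\,e^{\pm W_\la(x)}\le C\,\frac{|x|^{\al_j-2}}{(\de_j^{\al_j}+|x|^{\al_j})^2}\quad\hbox{on }A_j,\]
the dominant sign being $(-1)^j$ while the opposite sign carries an extra factor $\la^2\de_j^4$. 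Scaling $x=\de_j y$ and choosing $pr>1$ close to $1$ so that $\int_{\rr^2}|y|^{(\al_j-2)pr}/(1+|y|^{\al_j})^{2pr}dy<+\infty$, we get $\int_{A_j}(\la e^{|W_\la|})^{pr}dx=O(\de_j^{2-2pr})$. Summing over $j$ and noting that $\de_1=d_1\la^{(2k-1)/2}$ is the smallest of the $\de_j$'s (so, since $2-2pr<0$, it gives the dominant contribution) yields
\[\|\la e^{|W_\la|}\|_{pr}=O\!\(\la^{(2k-1)(1-pr)/(pr)}\),\]
and combining with the Moser-Trudinger estimates of the previous paragraph completes the proof of \eqref{B21} and \eqref{B22}. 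The only potential obstacle, namely uniform control of $\la e^{|W_\la|}$ across all $A_j$'s simultaneously, is automatic from the ansatz: it is exactly what the smallness of $\Theta_j$ in Lemma \ref{teta} guarantees.
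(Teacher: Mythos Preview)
Your overall scheme (second-order Taylor bound, H\"older splitting, Moser--Trudinger for the $\phi$-factors, then the key estimate of $\la e^{\pm W_\la}$ annulus by annulus) is exactly the paper's, and your treatment of the dominant sign $\int_{A_j}(\la e^{(-1)^jW_\la})^{pr}dx=O(\de_j^{2-2pr})$ is correct.

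The gap is in the opposite-sign piece. Writing $y=x/\de_j$, the ratio of the two signs is
\[
\frac{\la e^{-(-1)^jW_\la}}{\la e^{(-1)^jW_\la}}
=\la^2\de_j^4\, e^{-2\Theta_j(y)}\,\frac{(1+|y|^{\al_j})^4}{(2\al_j^2)^2\,|y|^{2(\al_j-2)}},
\]
so your ``extra factor $\la^2\de_j^4$'' is only the prefactor; the $y$-dependent part is unbounded on $A_j/\de_j$, whose radii run from $\sqrt{\de_{j-1}/\de_j}\to0$ to $\sqrt{\de_{j+1}/\de_j}\to\infty$. Equivalently, after your rescaling the opposite-sign integrand is a constant times $(1+|y|^{\al_j})^{2pr}/|y|^{(\al_j-2)pr}$, which is \emph{not} integrable over $\rr^2$, so the bound $O(\de_j^{2-2pr})$ does not follow for this piece. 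The uniform estimate $\Theta_j=O(1)$ in Lemma~\ref{teta} is not enough here.

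The paper deals with this by integrating the divergent profile over the finite annulus and tracking the boundary contributions $(\de_{j+1}/\de_j)^{pr(\al_j+2)/2+1}$ and $(\de_j/\de_{j-1})^{pr(\al_j-2)/2-1}$ explicitly, then invoking the algebraic relations \eqref{delta3}, \eqref{alfa1} among the $\de_j$'s to show the total is still $O(\la^{(2k-1)(1-pr)})$; the parallel computation \eqref{er5} in Lemma~\ref{errore} does essentially the same thing. That extra bookkeeping is what is missing from your proposal.
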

Proof Let us remark that (\ref{B21}) follows by   choosing
$\phi_2=0 $ in (\ref{B22}) . Let us prove (\ref{B22}). We point
out that
$$\mathcal{N}_\lambda(\phi)=\lambda e^{W_\lambda}\left(e^\phi-1-\phi\right)-\lambda e^{-W_\lambda}\left(e^{-\phi}-1+\phi\right)$$
 and so
$$\mathcal{N}_\lambda(\phi_1)-\mathcal{N}_\lambda(\phi_2)=
 \underbrace{\lambda e^{W_\lambda}  \left(e^{\phi_1}-e^{\phi_2}-\phi_1+\phi_2\right) }_{I_1}-\underbrace{\lambda e^{-W_\lambda}
 \left(e^{-\phi_1}-e^{-\phi_2}+\phi_1-\phi_2\right)}_{I_2}.$$
We estimate $\|I_1\|_p.$ The estimate of $\|I_2\|_p $ is similar.

 By the mean value theorem, we easily deduce that
 $$|e^a-e^b-a+b|\le e^{|a|+|b|}|a-b|(|a|+|b|)\ \hbox{for any }a,b\in\rr.$$
 Therefore,    we have
 \begin{eqnarray}\label{I1p}
 & &\|I_1\|_p=\left(\int\limits_\Omega \lambda^p e^{pW_\lambda}\left|e^{\phi_1}-e^{\phi_2}-\phi_1+\phi_2\right|^pdx\right)^{1/p}
 \nonumber\\ & &
 \le c\sum\limits_{j=1}^2
 \left(\int\limits_\Omega  \lambda^p e^{pW_\lambda} e^{p|\phi_1|+p|\phi_2|}|\phi_1-\phi_2|^p|\phi_j|^p dx\right)^{1/p}  \nonumber\\ & &\qquad \hbox{(we use H\"older's inequality  with ${1\over r}+{1\over s}+{1\over t}=1$ )}
 \nonumber\\ & &
 \le c\sum\limits_{j=1}^2
 \left(\int\limits_\Omega  \lambda^{pr} e^{prW_\lambda} dx\right)^{1/(pr)}
 \left(\int\limits_\Omega e^{ps|\phi_1|+ps|\phi_2|}dx\right)^{1/(ps)}
 \left(\int\limits_\Omega |\phi_1-\phi_2|^{pt}|\phi_j|^{pt} dx\right)^{1/(pt)}
  \nonumber\\ & &\qquad\hbox{(we use Lemma \ref{tmt})}
\nonumber\\ & &\
 \le c\sum\limits_{j=1}^2
 \left(\int\limits_\Omega  \lambda^{pr} e^{prW_\lambda} dx\right)^{1/(pr)}
  e^{(ps)/(8\pi)(|\phi_1|^2+|\phi_2|^2)}
  \|\phi_1-\phi_2\|\|\phi_j\|.\end{eqnarray}
We have to estimate
   \begin{align*}
  \int\limits_\Omega  \lambda^{pr} e^{prW_\lambda(x)} dx&=\sum\limits_{j}
 \int\limits_{A_j}  \lambda^{pr} e^{prW_\lambda(x)} dx,\end{align*}
 where $A_j$ is the annulus defined in \eqref{anelli}.

 If  $j$ is even  we get
  \begin{align*}&\int\limits_{A_j} \lambda^{pr}  e^{prW_\lambda(x)} dx\ \hbox{(we use \eqref{tetaj})}\\
  &=\delta_j^2\lambda^{pr} \int\limits_{A_j\over\delta_j} e^{pr \[w_j(\delta_jy)+(\alpha_j-2) \ln|\delta_jy|-\ln\lambda+\Theta_j(y)\]} dy\\ &
  = {\delta_j^{2-2pr} }\int\limits_{A_j\over\delta_j}\( 2\alpha_j^2{|y|^{\alpha_j-2}\over \(1+|y|^{\alpha_j}\)^2}\)^{pr}e^{pr \Theta_j(y) } dy\ \hbox{(we use Lemma \eqref{teta})} \\
  &=O\({\delta_j^{2-2pr} }\)=O\(\lambda^{(2k-1)(1-pr) }\)\ \hbox{( because $\delta_j\ge\delta_1=O\(\lambda^{2k-1\over2}\)$   and $pr>1 $)},\end{align*}
  and  $j$ is odd   we get
  \begin{align*}&\int\limits_{A_j}\lambda^{pr} e^{prW_\lambda(x)} dx\ \hbox{(we use \eqref{tetaj})}\\
   &=\delta_j^2\lambda^{pr}\int\limits_{A_j\over\delta_j} e^{-pr \[w_j(\delta_jy)+(\alpha_j-2) \ln|\delta_jy|-\ln\lambda+\Theta_j(y)\]} dy\\ &
  = {\delta_j^{2+2pr}\lambda^{2pr}}\int\limits_{\sqrt {\de_{j-1}\over\de_j}\le |y|\le \sqrt {\de_{j+1}\over\de_j}}\( {\(1+|y|^{\alpha_j}\)^2\over 2\alpha_j^2 |y|^{\alpha_j-2}  }\)^{pr}e^{-pr \Theta_j(y) } dy\ \hbox{(we use Lemma \eqref{teta})} \\
  &=O\({\delta_j^{2+2pr} \lambda^{2pr}}   \[\({\de_{j+1}\over\de_j}\)^{pr{\al_j+2\over2}+1}+\({\de_j\over\de_{j-1}}\)^{pr{\al_j-2\over2}-1}\]\)\nonumber\\
 = &O\( \lambda^{ pr} \[{\de_{j}^{pr+1}\over\de_{j+1}^{pr-1}}+{\de_{j}^{pr+1}\over\de_{j-1}^{pr-1}}\]\)= O\( \lambda^{ ({2k\over3}-1)(1-pr) }\)=O\( \lambda^{ ({2k }-1)(1-pr) }\),
  \end{align*}
  because by \eqref{delta3} and \eqref{alfa1} we get
  $$\({\de_{j+1}\over\de_j}\)^{pr{\al_j+2\over2}+1}= {\de_{j+1}^{pr {\al_{j+1}\over2 } }\over\de_j^{pr{\al_j\over2}}} {\de_{j+1}^{pr{\al_j-\al_{j+1}\over2}+pr+1}\over \de_j^{pr+1}}=O\( {1\over\la^{pr}\delta_j^{pr+1}\delta_{j+1}^{pr-1}}\),$$

 $$\({\de_j\over\de_{j-1}}\)^{pr{\al_j-2\over2}-1}= {\de_{j }^{pr {\al_{j }\over2}  }\over\de_{j-1}^{pr{\al_{j-1}\over2}}} {\de_j^{-pr-1}\over \de_{j-1}^{pr{\al_j-\al_{j-1}\over2}-pr-1} }=O\( {1\over\la^{pr}\delta_j^{pr+1}\delta_{j-1}^{pr-1}}\).$$
$$ {\de_{j}^{pr+1}\over\de_{j+1}^{pr-1}}=\({\de_{j} \over\de_{j+1}}\)^{pr } \de_j\de_{j+1}=o(1)$$
and
 \begin{align*}&\lambda^{ pr} {\de_{j}^{pr+1}\over\de_{j-1}^{pr-1}}= \lambda^{ pr}  \({\de_{j}\over\de_{j-1} }\)^{pr-1}\delta_j^2=O\( \lambda^{ pr}
 \({\de_{2}\over\de_{ 1} }\)^{pr-1}\delta_k^2\)\\ &=O\( \lambda^{ pr+{2k\over3}(1-pr)+{1\over 2k-1} }\)=O\( \lambda^{ ({2k\over3}-1)(1-pr) +{2k\over 2k-1}
 }\).
  \end{align*}
Therefore, by \eqref{I1p} we obtain that $\|I_1\|_p$ satisfies estimate \eqref{B22}.

   We recall the following Moser-Trudinger inequality \cite{Moe,Tru},
 \begin{lemma}\label{tmt} There exists $c>0$ such that for any   bounded domain $\Omega$ in $\rr^2$
 $$\int\limits_\Omega e^{4\pi u^2/\|u\|^2}dx\le c|\Omega|,\ \hbox{for any}\ u\in{\rm H}^1_0(\Omega).$$
 In particular,  there exists $c>0$ such that for any $\eta\in\rr$
 $$\int\limits_\Omega e^{\eta u}\le c|\Omega| e^{{\eta^2\over 16\pi}\|u\|^2},\
 \hbox{for any}\ u\in{\rm H}^1_0(\Omega).$$
 \end{lemma}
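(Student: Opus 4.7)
The first inequality is the classical sharp Moser--Trudinger inequality on bounded planar domains; it was established by Trudinger in a weaker exponential form and then sharpened to the critical constant $4\pi$ by Moser by combining Schwarz symmetrization with a one-dimensional reduction to an ODE estimate. Since the statement is quoted verbatim from \cite{Moe,Tru}, my plan is simply to invoke it rather than reprove it.

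The second inequality I would derive as an elementary consequence of the first. The strategy is to split the linear exponent $\eta u$ into a quadratic piece, to be absorbed by Moser--Trudinger at its critical threshold, plus a remainder depending only on $\|u\|$. Concretely, I would apply the Young inequality $2ab\le a^{2}+b^{2}$ to $a:=\sqrt{4\pi}\,u/\|u\|$ and $b:=\eta\,\|u\|/\sqrt{16\pi}$, obtaining the pointwise bound
$$\eta\,u(x)\;\le\;\frac{4\pi\,u(x)^{2}}{\|u\|^{2}}+\frac{\eta^{2}\|u\|^{2}}{16\pi},\qquad x\in\Omega.$$
Exponentiating, integrating over $\Omega$, pulling out the constant factor depending only on $\|u\|$, and applying the first inequality then yields
$$\int_{\Omega}e^{\eta u}\,dx\;\le\;e^{\eta^{2}\|u\|^{2}/(16\pi)}\int_{\Omega}e^{4\pi u^{2}/\|u\|^{2}}\,dx\;\le\;c\,|\Omega|\,e^{\eta^{2}\|u\|^{2}/(16\pi)},$$
which is exactly the claimed bound.

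No real obstacle is anticipated: both steps are standard. The only point requiring care is the calibration of the splitting, namely the choice of the weight $4\pi/\|u\|^{2}$ in Young's inequality, so that the quadratic exponent matches \emph{exactly} Moser's critical threshold $4\pi$; any other choice would still give a finite bound but would degrade the constant $\eta^{2}/(16\pi)$, and that precise constant is what is needed downstream in the proof of Lemma \ref{B2}.
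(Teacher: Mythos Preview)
Your proposal is correct and matches the paper's treatment: the paper does not prove this lemma at all but simply recalls it from the cited references \cite{Moe,Tru}, so your plan to invoke the first inequality and derive the second by the Young-inequality splitting is exactly the intended (and standard) argument. There is nothing further to compare.
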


\begin{proof}[Proof of Theorem \ref{main}]
By Proposition \ref{resto} we have that
\begin{equation}\label{b1}
u_\la=W_\la+\phi_\la=\sum\limits_{i=1}^k
(-1)^iP  w_i(x)+\phi_\la
\end{equation}
is a solution to \eqref{p}.

\medskip
Let us prove \eqref{a1}.
By \eqref{pro-exp}, we derive that
\begin{equation}\label{b2}
P w_i(x)=  4\pi\al_iG(x,0)+o\(1\)\ \hbox{pointwise in }  \Omega\setminus\{0\}, 
\end{equation}
 and so, by  \eqref{b1} and \eqref{somma} we get,
\begin{equation}\label{b3}
u_\la(x)=4\pi\sum\limits_{i=1}^k(-1)^i\al_iG(x,0)+o\(1\)=(-1)^k8\pi kG(x,0)+o\(1\)\ \hbox{pointwise in }  \Omega\setminus\{0\}
\end{equation}
  Moreover, for some $\theta\in(0,1)$ we have that
\begin{equation}\label{b4}
\la e^{W_\la+\phi_\la}-\la e^{-W_\la-\phi_\la}=\la e^{W_\la}-\la e^{-W_\la}+\la e^{W_\la+\theta\phi_\la}\phi_\la
+\la e^{-W_\la-\theta\phi_\la}\phi_\la.
\end{equation}
Let us fix a compact set $K\subset\Omega$ which does not contain the origin and let $q>1.$ From  \eqref{b4} we deduce
\begin{align}\label{b5}
 & \|\la e^{W_\la+\phi_\la}-\la e^{-W_\la-\phi_\la}\|_{L^q(K)} \nonumber\\
&=O\( \|\la e^{W_\la}-\la e^{-W_\la}\|_{L^q(K)}\)  +O\(\|\la e^{W_\la+\theta\phi_\la}\phi_\la+\la e^{-W_\la-\theta\phi_\la}\phi_\la\|_{L^q(K)} \)\nonumber\\
&=O(1) \ \hbox{(because of the definition of $W_\la$ and \eqref{b2})}\nonumber\\
&+O\(\|\la e^{W_\la}\|_{L^{\infty}(K)})\|\la e^{\theta\Phi_\la}\|_{L^{2q}(K)}\|\phi\|_{L^{2q}(K)}\)
+O\(\|\la e^{-W_\la}\|_{L^{\infty}(K)}\|\la e^{-\theta\Phi_\la}\|_{L^{2q}(K)}\|\phi\|_{L^{2q}(K)}\)\nonumber\\
&=O(1) \ \hbox{(using \eqref{b2}, Lemma \ref{tmt} and Proposition \ref{resto})}.\end{align}
Hence, by  \eqref{b4} and  \eqref{b5}, we derive that the R.H.S. of problem \eqref{p} is bounded in $L^q(K)$. Then standard results imply \eqref{a1}.

\medskip
Let us prove  \eqref{a2} and  \eqref{a3}. We will only consider the case $m_+(0)$ , since the other is similar. By the definition of $m_+(0)$ we get
\begin{align*}
m_+(0)=&\lim\limits_{r\to0}\lim\limits_{\la\to0}\int\limits_{B(0,r)}\la e^{W_\la+\phi_\la}=\lim\limits_{r\to0}\lim\limits_{\la\to0}\int\limits_{B(0,r)}\la e^{W_\la}(1+e^{\theta\phi_\la}\phi_\la)\\
=&\lim\limits_{r\to0}\lim\limits_{\la\to0}\(J_{1,\la,r}+J_{2,\la,r}\).
\end{align*}
So we have that,
\begin{align*}
 J_{1,\la,r}&=\int\limits_{B(0,r)}\la e^{W_\la}=\int\limits_{B(0,r)}\la e^{\sum\limits_{i=1}^k
(-1)^iP w_i(x)}=\sum\limits_{j=1}^k \int\limits_{A_j}\la e^{(-1)^jPw_j(x)+\sum\limits_{i=1\atop
i\not= j}^k(-1)^{i-j}P{w_i(x)}}dx \\
&\qquad\hbox{(using \eqref{anelli})}\nonumber\\
& =\sum\limits_{j=1\atop j\ {\rm even}}^k \int\limits_{A_j}\la e^{Pw_j(x)+\sum\limits_{i=1\atop
i\not= j}^k(-1)^{i-j}P{w_i(x)}}  dx+\sum\limits_{j=1\atop j\ {\rm odd}}^k \int\limits_{A_j}\la e^{-Pw_j(x)+\sum\limits_{i=1\atop
i\not= j}^k(-1)^{i-j}P{w_i(x)}}dx\\
&= 
\sum\limits_{j=1\atop j\ {\rm even}}^k \int\limits_{\rr^2}|x|^{\al_j-2}e^{w_j(x)}dx+o(1)\  \hbox{(using \eqref{er3b} and \eqref{er5} with $p=1$)} \\ 
&=\sum\limits_{j=1\atop j\ {\rm even}}^k 4\pi\al_i+o(1) \ \hbox{(we use \eqref{mass})}\\ &=
\sum\limits_{j=1\atop j\ {\rm even}}^k 4\pi(4i-2)+o(1) \ \hbox{(we use  \eqref{alfa})}\\
& = \left\{\begin{aligned}
&4\pi k(k-1)\quad\hbox{if $k$ is even}\\
&4\pi k(k+1)\quad\hbox{if $k$ is odd }
 \end{aligned}\right. +o(1).
\end{align*}
Here we used a result of Chen-Li \cite{cl} which states the mass
\begin{equation}\label{mass}
\int\limits_{\rr^2} |y|^{\al-2}e^{w^\al(x)}dx=4\pi\al,\
\end{equation}
where
$$w^\al (x):=\ln 2\al^2{1\over\(1+|x|^\al\)^2},\quad
x\in\rr^2.
$$
 So $$\lim\limits_{r\to0}\lim\limits_{\la\to0}I_{1,\la,r}=\left\{\begin{aligned}
&4\pi k(k-1)\quad\hbox{if $k$ is even}\\
&4\pi k(k+1)\quad\hbox{if $k$ is odd}
 \end{aligned}\right. .$$
On the other hand, arguing exactly as in \eqref{I1p}
we get (for some ${1\over p}+{1\over q}+{1\over s}=1$)
\begin{align*}
 J_{2,\la,r}&=\int\limits_{B(0,r)}\la e^{W_\la}e^{\theta\phi_\la}\phi_\la=O\(\|\la e^{W_\la}\|_p\|\la e^{\theta\Phi_\la}\|_{q}\|\phi\|_{s}\)=o(1) .
\end{align*}
So we have that $$\lim\limits_{r\to0}\lim\limits_{\la\to0}I_{2,\la,r}=0$$ which ends the proof.
\end{proof}

\section{Appendix}
We have the following result.

\begin{thm}
\label{esposito} Assume $\alpha\over2$  is odd .
If $\phi  $  satisfies 
\begin{equation}\label{even}\phi(y)=\phi(-y)\ \hbox{for any     $y\in\rr^2$}\end{equation} and
solves the equation
\begin{equation}\label{l1}
-\Delta \phi =2\alpha^2{|y|^{\alpha-2}\over (1+|y|^\alpha)^2}\phi\ \hbox{in}\ \rr^2,\quad \int\limits_{\rr^2}|\nabla \phi(y)|^2dy<+\infty,
\end{equation}
then there exists $a\in\rr$ such that
$$\phi(y)=\gamma   {1-|y|^\al\over 1+|y|^\al}\ \hbox{for some}\ \gamma\in\rr .$$
\end{thm}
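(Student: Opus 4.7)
The approach is angular Fourier decomposition together with a mode-by-mode classification of finite-energy radial solutions. In polar coordinates I would write $\phi(r,\theta)=\sum_{n\in\mathbb{Z}}\phi_n(r)e^{in\theta}$; inserting this into \eqref{l1} and using the orthogonality of the angular exponentials decouples the problem into the family of radial ODEs
$$\phi_n''+\frac{1}{r}\phi_n'-\frac{n^2}{r^2}\phi_n+2\al^2\frac{r^{\al-2}}{(1+r^\al)^2}\phi_n=0,\qquad (\star_n)$$
for $n\in\mathbb{Z}$. The hypothesis $\phi(-y)=\phi(y)$ is equivalent to $\phi_n\equiv 0$ for every odd $n$, and the finite energy condition $\int|\nabla\phi|^2<+\infty$ splits into a finite-energy condition on each mode; it therefore suffices to treat every even $n\geq 0$ separately.

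For $n=0$, the explicit solution $Z_0(r)=\frac{1-r^\al}{1+r^\al}$ arises by differentiating the scaling family $w^\al_\de$ with respect to $\de$ at $\de=1$. By Abel's formula, a second linearly independent radial solution of $(\star_0)$ has logarithmic behavior at both $r=0$ and $r=\infty$ and therefore does not lie in $\dot{H}^1(\rr^2)$. Hence $\phi_0=\gamma Z_0$ for some $\gamma\in\rr$.

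For an even integer $n\neq 0$ I would transform $(\star_n)$ via the substitution $t=r^\al/(1+r^\al)\in(0,1)$ together with the ansatz $\phi_n(r)=r^{|n|}(1+r^\al)^{-|n|/\al}F(t)$, reducing $(\star_n)$ to a Gauss hypergeometric equation with parameters depending on $|n|/\al$. The indicial exponents at the two endpoints $r=0$ and $r=\infty$ are $\pm|n|/\al$, and finiteness of the Dirichlet energy forces $\phi_n$ to coincide with the subdominant branch at each endpoint. Applying the classical ${}_2F_1$ connection formula then yields an arithmetic compatibility condition on $|n|/\al$. Under the hypothesis that $\al/2$ is odd, the only value of $|n|\in 2\mathbb{N}$ meeting this condition is $|n|=0$; consequently $\phi_n\equiv 0$ for every even $n\neq 0$ and the theorem follows.

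The main obstacle is precisely this last step: the hypergeometric reduction itself is mechanical but lengthy, and identifying the parity of the exceptional values of $|n|$ requires careful tracking of Gamma-function poles in the connection formula. A more structural alternative is to recognize every non-radial kernel element as a derivative (with respect to a polynomial coefficient) of the Prajapat-Tarantello non-radial family of solutions to \eqref{plim}; when $\al/2$ is odd, inspection of the underlying polynomial data shows that all such derivatives lie in odd Fourier modes and are therefore annihilated by the evenness hypothesis from Step 1. In either route, the assumption $\al/2\in 2\mathbb{N}+1$ is used precisely at this point to rule out the even non-radial modes.
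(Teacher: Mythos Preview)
Your Fourier-mode strategy is sound and would ultimately succeed, but the paper takes a much shorter route. Rather than classifying finite-energy solutions mode by mode, the paper first shows that any finite-energy solution of \eqref{l1} is automatically bounded on all of $\rr^2$: local $L^\infty$ bounds come from elliptic regularity, and boundedness at infinity follows from the observation that the Kelvin transform $z(x)=\phi(x/|x|^2)$ satisfies the \emph{same} equation \eqref{l1} with the same Dirichlet energy, hence is itself locally bounded. Once boundedness is in hand, the paper simply quotes the classification of Del~Pino--Esposito--Musso \cite{dem}, which says every bounded solution is a linear combination of
\[
\phi_0(y)=\frac{1-|y|^\al}{1+|y|^\al},\quad
\phi_1(y)=\frac{|y|^{\al/2}}{1+|y|^\al}\cos\tfrac{\al}{2}\theta,\quad
\phi_2(y)=\frac{|y|^{\al/2}}{1+|y|^\al}\sin\tfrac{\al}{2}\theta;
\]
when $\al/2$ is odd, $\phi_1$ and $\phi_2$ are odd functions of $y$ and are eliminated by \eqref{even}. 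Your hypergeometric computation and your ``structural alternative'' via the Prajapat--Tarantello family are, in effect, a rederivation of \cite{dem} from scratch; that is a legitimate self-contained route, but substantially longer and with the connection-formula bookkeeping you yourself flag as delicate. The paper's argument trades that work for a single symmetry observation (invariance of \eqref{l1} under inversion) plus an existing reference, which is why it is only a few lines.
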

\begin{proof}
Del Pino-Esposito-Musso in \cite{dem} proved that all the bounded solutions to \eqref{l1} are a linear combination of the following functions (which are written in polar coordinates)
$$\phi_0(y):   ={1-|y|^\alpha\over 1+|y|^\alpha} ,\  \phi_1(y):={ |y|^{\alpha\over 2}\over 1+|y|^\alpha} \cos{\alpha\over2}\theta 
,\
\phi_2(y):={ |y|^{\alpha\over 2}\over 1+|y|^\alpha} \sin{\alpha\over2}\theta.
$$
We observe that $\phi_0$   always satisfies \eqref{even}, while if $\alpha\over2$  is odd the functions $\phi_1$ and $\phi_2$ do not satisfy \eqref{even}.
So, we just have  to prove that any solution $\phi$ of \eqref{l1} is actually a bounded solution, i.e. $\phi\in \mathrm{L}^\infty(\rr^2).$ The claim will follow by \cite{dem}.\\ 
Since $ \phi$ is a solution in the sense of distribution to \eqref{l1}, from the boundedness of RHS in $L^2_{loc}(\rr^2)$ and by the regularity theory we get that $ \phi\in L^\infty_{loc}(\rr^2)$.\\
In order to end the proof we have to show that $ \phi$ is bounded near infinity. Let us consider the Kelvin transform of $ \phi$, namely,
$$z(x)= \phi\left(\frac x{|x|^2}\right).$$
A straightforward computation gives
$$\int\limits_{\rr^2}|\nabla z(y)|^2dy=\int\limits_{\rr^2}|\nabla \phi(y)|^2dy$$
and
$$
-\Delta z =2\alpha^2{|y|^{\alpha-2}\over (1+|y|^\alpha)^2}z\ \hbox{in}\ \rr^2.
$$
So we have that $z$ satisfies the same problem as $\phi$ and then $z\in L^\infty_{loc}(\rr^2)$. This implies that $\phi$ is bounded near infinity which ends the proof.
\end{proof}

 For any $\alpha\ge2$  let us consider the Banach spaces
\begin{equation}\label{ljs}
\mathrm{L}_\alpha(\rr^2):=\left\{u \in {\rm W}^{1,2}_{loc}(\rr^2)\ :\  \left\|{|y|^{\alpha-2\over 2}\over 1+|y|^\alpha}u\right\|_{\mathrm{L}^2(\rr^2)}<+\infty\right\}\end{equation}
 and
\begin{equation}\label{hjs}\mathrm{H}_\alpha(\rr^2):=\left\{u\in {\rm W}^{1,2}_{loc}(\rr^2) \ :\ \|\nabla u\|_{\mathrm{L}^2(\rr^2)}+\left\|{|y|^{\alpha-2\over 2}\over 1+|y|^\alpha}u\right\|_{\mathrm{L}^2(\rr^2)}<+\infty\right\},\end{equation}
 endowed with the norms
$$\|u\|_{\mathrm{L}_\alpha}:= \left\|{|y|^{\alpha-2\over 2}\over 1+|y|^\alpha}u\right\|_{\mathrm{L}^2(\rr^2)}\ \hbox{and}\ \|u\|_{\mathrm{H}_\alpha}:= \(\|\nabla u\|^2_{\mathrm{L}^2(\rr^2)}+\left\|{|y|^{\alpha-2\over 2}\over 1+|y|^\alpha}u\right\|^2_{\mathrm{L}^2(\rr^2)}\)^{1/2}.$$
\begin{prop}\label{compact}
The embedding $i_\al:\mathrm{H}_\alpha(\rr^2)\hookrightarrow\mathrm{L}_\alpha(\rr^2)$
is compact.
\end{prop}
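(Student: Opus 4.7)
The plan is to establish compactness by combining a local Rellich--Kondrachov argument on bounded balls with a uniform tail estimate obtained through the Kelvin inversion. Throughout, set $\rho(y):=|y|^{\alpha-2}/(1+|y|^\alpha)^2$; this weight is bounded on $\rr^2$ for $\alpha\ge 2$ (it is $O(1)$ near $0$ and decays like $|y|^{-\alpha-2}$ at infinity), and $\|u\|_{\mathrm{L}_\alpha}^2=\int_{\rr^2}\rho u^2\,dy$.

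Let $\{u_n\}$ be a bounded sequence in $\mathrm{H}_\alpha(\rr^2)$ with $\|u_n\|_{\mathrm{H}_\alpha}\le M$. For every $R>0$ the weight $\rho$ is bounded below by some $c_R>0$ on the annulus $B_R\setminus B_{R/2}$, so
$$\|u_n\|_{L^2(B_R\setminus B_{R/2})}^2\le c_R^{-1}\int_{B_R\setminus B_{R/2}}\rho u_n^2\,dy\le c_R^{-1}M^2.$$
This controls the mean of $u_n$ over this annulus, and Poincar\'e--Wirtinger on $B_R$ with mean taken there, combined with $\|\nabla u_n\|_{L^2(B_R)}\le M$, yields $\|u_n\|_{H^1(B_R)}\le C_R M$. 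By Rellich--Kondrachov and a diagonal extraction I obtain a subsequence (not relabelled) converging strongly in $L^2(B_R)$ for every $R$ to some $u\in\mathrm{H}_\alpha(\rr^2)$; since $\rho$ is globally bounded, this upgrades to $\int_{B_R}\rho(u_n-u)^2\,dy\to 0$ for every $R>0$.

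For the tails, I exploit the Kelvin inversion $\phi(x):=x/|x|^2$. A direct computation using $|y|=1/|x|$ and $dy=|x|^{-4}\,dx$ gives the exact invariance $\rho(y)\,dy=\rho(x)\,dx$; combined with the conformal invariance of the Dirichlet integral in two dimensions, this shows that $u\mapsto u\circ\phi$ is an isometric involution of both $\mathrm{L}_\alpha(\rr^2)$ and $\mathrm{H}_\alpha(\rr^2)$, and
$$\int_{|y|>R}\rho u^2\,dy=\int_{|x|<1/R}\rho\,(u\circ\phi)^2\,dx.$$
It thus suffices to prove uniform smallness of $\int_{B_r}\rho w^2\,dx$ over $\{w:\|w\|_{\mathrm{H}_\alpha}\le M\}$ as $r\to 0$. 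Since $\rho(x)\le C|x|^{\alpha-2}$ near $0$, H\"older's inequality with a conjugate pair $(p,q)$, $p>1$, together with the $2$D Sobolev embedding $H^1(B_1)\hookrightarrow L^{2q}(B_1)$, gives
$$\int_{B_r}\rho w^2\,dx\le\Bigl(\int_{B_r}|x|^{(\alpha-2)p}\,dx\Bigr)^{1/p}\|w\|_{L^{2q}(B_1)}^2\le C\,r^{(\alpha-2)+2/p}\|w\|_{H^1(B_1)}^2\le CM^2\,r^{(\alpha-2)+2/p},$$
where the last bound uses the $H^1(B_1)$ estimate from the previous paragraph applied to $w$. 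The exponent of $r$ is strictly positive for $\alpha\ge 2$, so the estimate is uniformly $o_r(1)$. Applying it to $u_n$ handles the near-origin tail, and applying it to $u_n\circ\phi$ handles the region $|y|>R$.

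The conclusion is routine: given $\eps>0$, pick $r$ small and $R$ large so that each tail is at most $\eps/3$ uniformly in $n$, then use the local strong convergence to make the contribution from $B_R\setminus B_r$ less than $\eps/3$ for $n$ large. The main technical point --- and really the only nontrivial one --- is the uniform control of the far-field tail, since members of $\mathrm{H}_\alpha$ (for instance constants) need not decay at infinity and $\|\nabla u\|_{L^2}$ alone carries no pointwise information. The Kelvin trick is what makes the argument clean: it converts the missing decay at infinity into the explicit integrability of $|x|^{\alpha-2}$ near the origin, so a single estimate takes care of both tails.
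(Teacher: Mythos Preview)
Your argument is correct, and it follows a genuinely different route from the paper's. The paper first settles the case $\alpha=2$ by stereographic projection: the map $u\mapsto u\circ\pi$ identifies $\mathrm{L}_2(\rr^2)$ with $L^2(\mathbb{S}^2)$ and $\mathrm{H}_2(\rr^2)$ with $H^1(\mathbb{S}^2)$, so compactness reduces to the classical Rellich embedding on the sphere. The general $\alpha\ge2$ is then pulled back to $\alpha=2$ via the radial change of variables $s=r^{\alpha/2}$, which gives a bounded isomorphism $\mathfrak{T}_\alpha$ between the $\alpha$-spaces and the $2$-spaces, whence $i_\alpha=\mathfrak{T}_\alpha^{-1}\circ i_2\circ\mathfrak{T}_\alpha$ is compact.

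Your proof is more direct and entirely self-contained: local $H^1$ bounds (obtained from the weighted $L^2$ control on an annulus plus Poincar\'e--Wirtinger) feed Rellich--Kondrachov on balls, and the lack of decay at infinity is handled by the Kelvin inversion, which you correctly verify is an isometry of both $\mathrm{H}_\alpha$ and $\mathrm{L}_\alpha$ and converts the far tail into a near-origin tail where the weight $|x|^{\alpha-2}$ provides the smallness. The paper's approach is shorter once the two identifications are in place and makes the geometric content explicit (after the radial reparametrisation, $\rho\,dy$ is essentially the round measure on $\mathbb{S}^2$); your approach avoids introducing the sphere and the auxiliary operator $\mathfrak{T}_\alpha$, at the cost of a hands-on tail estimate. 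Your Kelvin symmetry is in fact the same symmetry the paper exploits---under stereographic projection it corresponds to a reflection of $\mathbb{S}^2$---but you use it in the plane rather than passing to the sphere. One minor redundancy: since your local convergence already covers all of $B_R$, the separate near-origin tail estimate for $u_n$ is not strictly needed; only the far-field tail (via $u_n\circ\phi$) requires the $\int_{B_r}\rho w^2$ bound.
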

\begin{proof}
Firstly, let $\alpha=2.$ If $\mathbb{S}^2$ denotes the unit sphere
in $\rr^3$ with the standard metric and $\pi:\mathbb{S}^2\to\rr^3$
is the stereographic projection through the north pole, then the
map $u\to u \circ\pi$ is an isometry from $\mathrm{L}_ 2$ into
$\mathrm{L}^2(\mathbb{S}^2)$ and from $\mathrm{H}_ 2$ into
$\mathrm{H}^1(\mathbb{S}^2)$ Hence, the claim follows directly
from the compactness of the embedding
$\mathrm{H}^1(\mathbb{S}^2)\hookrightarrow
\mathrm{L}^2(\mathbb{S}^2).$ Now, let $\alpha\ge 2.$ Let us define
an operator $\mathfrak{T}_\alpha:\mathrm{L}_\alpha(\rr^2)\to\mathrm{L}_2(\rr^2)$
by
 $\mathfrak{T}_\alpha(u)=\overline u$ where the function $\overline u$ is defined in this way
 $$\overline u(z):=\hat u(|z|,\theta), \  \hbox{where}\  \hat u(s,\theta):=\tilde  u(s^{2/\alpha},\theta)\ \hbox{and}\  \tilde  u(r,\theta):=u \(r\cos\theta,r\sin\theta\) .$$

 We will prove that
 \begin{equation}\label{stima}\|\mathfrak{T}_\alpha\|_{\mathcal{L}\(\mathrm{L}_\alpha(\rr^2),\mathrm{L}_2(\rr^2)\)}={2\over\alpha}\quad \hbox{and}\quad
 {2\over\alpha}\le\|\mathfrak{T}_\alpha\|_{\mathcal{L}\(\mathrm{H}_\alpha(\rr^2),\mathrm{H}_2(\rr^2)\)}\le{\alpha\over2}.
 \end{equation}
 The compactness of the embedding for $\alpha\ge2$ will follow immediately, because $i_\al=\mathfrak{T}_\alpha^{-1}\circ i_2\circ\mathfrak{T}_\alpha.$

 \medskip
 Let us prove \eqref{stima}
  A direct computation shows that
\begin{align*}\int\limits_{\rr^2} {|y|^{\alpha-2 }\over (1+|y|^\alpha)^2}u^2(y)\ dy&= \int\limits^{2\pi}_0 \int\limits^{+\infty}_0{r^{\alpha-1}\over (1+r^\alpha)^2}\tilde u^2(r,\theta)\ dr \ d\theta ={2\over\alpha}
\int\limits^{2\pi}_0 \int\limits^{+\infty}_0{s \over (1+s^2)^2}\hat u^2(s,\theta)\ ds \ d\theta\\
&={2\over\alpha}\int\limits_{\rr^2} {1\over (1+|z|^\alpha)^2}
\overline u^2(z)\ dz,\end{align*} which proves the first estimate
in \eqref{stima}. Moreover, we also have
\begin{align*}\int\limits_{\rr^2} |\nabla u|^2(y)\ dy&= \int\limits^{2\pi}_0 \int\limits^{+\infty}_0 r \[\(\partial_r \tilde u\)^2+
{\(\partial_\theta \tilde u \)^2\over r^2}
\]\ dr \ d\theta ={2\over\alpha}\int\limits^{2\pi}_0 \int\limits^{+\infty}_0 s \left\{{\alpha^2\over4}\(\partial_s \hat u\)^2+
{\(\partial_\theta \hat u \)^2\over s^2} \right\}\ ds \
d\theta\end{align*} and so
$${2\over\alpha}\int\limits_{\rr^2} |\nabla \overline u|^2(z)\ dz\le \int\limits_{\rr^2} |\nabla u|^2(y)\ dy\le {\alpha\over2}\int\limits_{\rr^2} |\nabla \overline u|^2(z)\ dz,$$ which proves the second estimate in \eqref{stima}.

\end{proof}


\begin{thebibliography}{99}

\bibitem{bp} Baraket, S.; Pacard, F. {\em Construction of singular limits for a semilinear elliptic equation in
dimension 2.} Calc. Var. Partial Diff. Equ.  {\bf 6} (1998) no. 1, 1--38.

\bibitem{bpi} Bartolucci, D.; Pistoia, A. {\em Existence and qualitative properties of concentrating solutions
for the sinh-Poisson equation.} IMA J. Appl. Math. {\bf 72} (2007) no. 6, 706--729.

\bibitem{bpw} Bartsch, T.; Pistoia, A.; Weth, T. {\em N-vortex equilibria for ideal fluids in bounded planar domains and new nodal solutions of the sinh-Poisson and the Lane-Emden-Fowler equations.} Comm. Math. Phys. {\bf 297 } (2010), no. 3, 653--686.

\bibitem{cl} Chen, W; Li, C. {\em Qualitative properties of solutions to some nonlinear elliptic equations in $\rr^2$.} Duke Math. J. {\bf 71 } (1993) 427--439.

\bibitem{c} Chorin, A. J. {\em Vorticity and turbulunce.} Springer, New York  (1994)


\bibitem{ddm1}  Del Pino, M.; Dolbeault, J.; Musso, M. {\em  "Bubble-tower'' radial solutions in the slightly supercritical Brezis-Nirenberg problem.} J. Differential Equations{\bf 193} (2003), no. 2, 280--306.

\bibitem{ddm2}  Del Pino, M.; Dolbeault, J.; Musso, M.  {\em  The Brezis-Nirenberg problem near criticality in dimension 3.} J. Math. Pures Appl. (9) {\bf 83} (2004), no. 12, 1405--1456.


\bibitem{ddm}  Del Pino, M.; Dolbeault, J.; Musso, M.  {\em  Multiple bubbling for the exponential nonlinearity in the slightly supercritical case.}   Comm. Pure Appl. Anal.   {\bf 5} (2006), no. 3, 463--482.

\bibitem{dem}
Del Pino, M.; Esposito, P.; Musso, M. {\em Nondegeneracy of entire
solutions of a singular Liouvillle equation.} Proc. Amer. Math.
Soc. {\bf 140} (2012), no. 2, 581--588.

\bibitem{dem2}
Del Pino, M.; Esposito, P.; Musso, M. {\em Two-dimensional Euler flows with concentrated vorticities.}  Trans. Amer. Math. Soc. {\bf 362} (2010), no. 12, 6381--6395.

\bibitem{dkm} Del Pino, M.; Kowalczyk, M.; Musso, M. {\em Singular limits in Liouville-type equations.}  Calc.
Var. Partial Diff. Equ. {\bf 24} (2005) no. 1, 47--81.
\bibitem{dmp}  Del Pino, M.; Musso, M.; Pistoia, A. {\em  Super-critical boundary bubbling in a semilinear Neumann problem.} Ann. Inst. H. Poincar\'e Anal. Non Lin\'eaire {\bf 22} (2005), no. 1, 45--82.

\bibitem{egp} Esposito, P.; Grossi, M.; Pistoia, A.{\em On the existence of blowing-up solutions for a mean field
equation.} Ann. IHP Anal. Non Lineaire {\bf 22} (2005)  no. 2, 127--157.


\bibitem{ew} Esposito, P.; Wei, J. {\em
Non-simple blow-up solutions for the Neumann two-dimensional sinh-Gordon equation.
}
Calc. Var. Partial Differential Equations {\bf 34} (2009), no. 3, 341--375.
\bibitem{gjp}  Ge, Y.; Jing, R.; Pacard, F. {\em
Bubble towers for supercritical semilinear elliptic equations.}
J. Funct. Anal. {\bf 221} (2005), no. 2, 251--302.

\bibitem{gmp} Ge, Y.; Musso, M.; Pistoia, A. {\em Sign changing tower of bubbles for an elliptic problem at the critical exponent in pierced non-symmetric domains.} Comm. Partial Differential Equations {\bf 35} (2010), no. 8, 1419--1457.


\bibitem{gg} Gladiali, F.; Grossi, M.;  {\em On the spectrum of a nonlinear planar problem.} Ann. IHP Anal. Non Lineaire  {\bf 26} (2009),   191--222.

 
\bibitem{ggp} Grossi, M.; Grumiau, C.; Pacella, A. {\em Lane-Emden problems with large exponents and singular liouville equations}   (2012) (preprint)


\bibitem{jwyz} Jost, J.; Wang, G.; Ye, D.; Zhou, C.{\em The blowup analysis of solutions to the elliptic sinh-Gordon
equation.} Calc. Var. Partial Diff. Equ. {\bf 31} (2008)  no.2, 263--276.

\bibitem{ls} Li, Y.Y.; Shafrir, I. {\em  Blow-up analysis for solutions of $-\Delta u=Ve^u$ in dimension two.} Indiana Univ. Math. J. {\bf 43} 1255--1270 (1994)

\bibitem{mapu}   Marchioro, C.; Pulvirenti, M.  {\em Mathematical Theory of Incompressible Nonviscous Fluids.}
Springer, New York (1994)

\bibitem{Moe}   Moser, J. {\em  A sharp form of an inequality by N.Trudinger.}
 Indiana Univ. Math. J.  {\bf 20} (1970/71), 1077-1092.

\bibitem{mp1}  Musso, M.; Pistoia, A. {\em  Tower of bubbles for almost critical problems in general domains.} J. Math. Pures Appl. (9) {\bf 93} (2010), no. 1, 1-- 40.

    \bibitem{mp2}  Musso, M.; Pistoia, A. {\em    Sign changing solutions to a nonlinear elliptic problem involving the critical Sobolev exponent in pierced domains.} J. Math. Pures Appl. (9) {\bf 86 } (2006), no. 6, 510--528.

\bibitem{os} Ohtsuka, H.; Suzuki, T. {\em Mean field equation for the equilibrium turbulence and a related functional inequality.} Adv. Differential Equations {\bf 11} (2006), no. 3, 281--304.

\bibitem{pw} Pistoia, A.; Weth, T. {\em Sign changing bubble tower solutions in a slightly subcritical semilinear Dirichlet problem.} Ann. Inst. H. Poincar\'e Anal. Non Lin\'eaire {\bf 24} (2007), no. 2, 325--340.

\bibitem{pt}   Prajapat, J.; Tarantello,  G. {\em On a class of elliptic problem in $\rr^2$: Symmetry and uniqueness
results.} Proc. Roy. Soc. Edinburgh Sect. A {\bf 131} (2001), 967--985.


\bibitem{s} Spruck, J.
{\em The elliptic sinh Gordon equation and the construction of toroidal soap bubbles. Calculus of variations and partial differential equations} (Trento, 1986), 275--301,
Lecture Notes in Math., 1340, Springer, Berlin, 1988. 

\bibitem{Tru}     Trudinger, N.S. {\em On imbeddings into Orlicz spaces and some applications.}
 J. Math. Mech.  {\bf  17} (1967), 473--483.
 
\bibitem{w1}  Wente, H. {\em Large solutions of the volume constrained Plateau problem.} Arch. Rat.Mech.  Anal. {\bf 75} (1980) 59--77
\bibitem{w2}  Wente, H. {\em Counterexample to a conjecture of H. Hopf.} Pacific J. Math. {\bf 121} (1986) 193--243 
 
\end{thebibliography}
\end{document}